\tikzset{
	rot90/.style={anchor=south, rotate=90, inner sep=.5mm}
}
\tikzset{
	rot45/.style={anchor=south, rotate=-45, inner sep=.5mm}
}
\newtheorem{theorem}{Theorem}[section]
\newtheorem{lemma}[theorem]{Lemma}
\newtheorem{proposition}[theorem]{Proposition}
\newtheorem{corollary}[theorem]{Corollary}
\newtheorem{conjecture}[theorem]{Conjecture}
\theoremstyle{definition}
\newtheorem{definition}[theorem]{Definition}
\newtheorem{remark}[theorem]{Remark}
\title{The spectral Sullivan conjecture}
\author{Ishan Levy}
\renewcommand\tableofcontents{%
	\@starttoc{toc}%
}
\newcounter{counter}
\newtheorem{thm}[counter]{Theorem}
\newcommand{\Addresses}{{
		\bigskip
		\footnotesize
		\textsc{Department of Mathematics, Institute of Advanced Studies, USA}\par\nopagebreak
		\textit{E-mail address}: \texttt{ishanl@ias.edu}
}}
\begin{document}
	\date{}
	\maketitle
	\begin{abstract}
		We show that any map from an infinite loop space to a $p$-complete nilpotent finite dimensional space factors canonically through a union of $p$-adic tori. This is proven via bootstrapping from the case of $B\ZZ/p\ZZ$, which is the key case of the Sullivan conjecture proven by Miller. The main step in our proof is to show that the subcategory of spectra generated by the reduced suspension spectrum of $B\ZZ/p\ZZ$ under colimits and extensions agrees with that of a Moore spectrum.
	\end{abstract}
	\begin{spacing}{0.1}
		\tableofcontents
	\end{spacing}
\section{Introduction}

There is a dichotomy in homotopy theory of the types of spaces that are often studied. On one hand, there are spaces that are equivalent to finite dimensional CW-complexes such as many spaces coming from manifolds, varieties. On the other hand, there are spaces that are used as `homotopy theoretic moduli spaces', i.e maps into them classify some kind of homotopy theoretic data associated to the source. Spaces of this form include $BG$ for a compact Lie group $G$, finite Postnikov towers, and $\Omega^{\infty}E$ where $E$ is a spectrum. 

Often one detects information about finite dimensional CW-complexes by mapping into homotopy theoretic moduli spaces. The Sullivan conjecture, proposed by Sullivan in \cite{sullivan1970geometric}, and resolved by Miller \cite{miller1984sullivan} (see also \cite{carlsson1991equivariant}) puts severe restrictions on maps in the other direction $p$-adically.\footnote{Rationally, the space of such maps is often highly nontrivial, because the suspension of any simply connected rational space is a wedge of spheres.}

Let $\Spc$ denote the ($\infty$-)category of spaces. We say that $Y\in \Spc$ is \textit{finite dimensional} if it is the homotopy type of a finite dimensional CW-complex. We use $\FF_p$-localization to refer to the Bousfield localization of spaces with respect to $\FF_p$-homology. The key case of the Sullivan conjecture proven by Miller is the following:

\begin{thm}[{\cite[Theorem C]{miller1984sullivan}}]\label{thm:miller}
	Let $Y$ be a finite dimensional nilpotent\footnote{This means that $\pi_1Y$ is nilpotent at each base point and acts unipotently on the higher homotopy groups of $Y$. In particular, it is satisfied for $Y$ simply connected.} $\FF_p$-local space. Then the map $\Map(BC_p,Y) \to Y$ is an equivalence.
\end{thm}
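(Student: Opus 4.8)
The plan is to reduce the statement, via Lannes's theory of the $T$-functor, to a single algebraic assertion — the injectivity in the category $\mathcal U$ of unstable modules over the mod $p$ Steenrod algebra $\mathcal A$ of the cohomologies $H^{*}(BV;\FF_p)$ of elementary abelian $p$-groups $V$ — which is the technical heart of the Sullivan conjecture and which I do not expect to reach by soft means; the reduction itself is formal.

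First, the formal set-up. A nilpotent $\FF_p$-local space is $\FF_p$-complete, so $Y$ is $\FF_p$-complete; and $\Map(BC_p,Y)$, being a space of maps into an $\FF_p$-local space, is $\FF_p$-local, with nilpotent components (a standard property of mapping spaces into nilpotent targets), hence also $\FF_p$-complete. Thus $\mathrm{ev}\colon\Map(BC_p,Y)\to Y$, restriction along the basepoint of $BC_p$, is a map between $\FF_p$-complete spaces, and by the $\FF_p$-homology Whitehead theorem for nilpotent spaces it suffices to show that $\mathrm{ev}$ induces an isomorphism on $\FF_p$-cohomology and that $\Map(BC_p,Y)$ is connected.

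Second, I would invoke Lannes's theorem. After reducing to the case that $Y$ is a finite complex (recovering the general finite-dimensional case by a limiting argument) and, if necessary, to $Y$ simply connected (by running the argument over a principal refinement of the Postnikov tower of $Y$), Lannes's theorem identifies $\mathrm{ev}^{*}\colon H^{*}(Y;\FF_p)\to H^{*}(\Map(BC_p,Y);\FF_p)$ with the natural map $M\to TM$, with $M=H^{*}(Y;\FF_p)$; here $T=T_{\FF_p}\colon\mathcal U\to\mathcal U$ is Lannes's functor, the left adjoint of $N\mapsto H^{*}(BC_p;\FF_p)\otimes N$, and the map is the one induced by the augmentation $H^{*}(BC_p;\FF_p)\to\FF_p$. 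Since $T$ preserves direct sums, $TM\cong M\oplus\overline TM$ with $\overline T(H^{*}Y)\cong\overline T(\widetilde H^{*}Y)$, so $\mathrm{ev}^{*}$ is an isomorphism precisely when $\overline T(\widetilde H^{*}Y)=0$; and when that holds $H^{0}(\Map(BC_p,Y);\FF_p)=\FF_p$, so the mapping space is connected and the previous paragraph applies. The entire theorem thus comes down to proving $\overline T(\widetilde H^{*}Y)=0$.

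This is the crux, and it is the only place where finite-dimensionality of $Y$ is used: $\widetilde H^{*}Y$ is a \emph{bounded-above} unstable module, and one must show that $\overline T$ annihilates every bounded-above — indeed every locally finite — unstable module. Since $T$ is a left adjoint it commutes with filtered colimits and $\widetilde H^{*}Y$ is the union of its finite submodules, so one reduces to $\widetilde H^{*}Y$ of finite length; granting that $T$ is \emph{exact}, an induction along a composition series — whose simple subquotients in $\mathcal U$ are the modules $\Sigma^{n}\FF_p$ — reduces the problem to the single verification $\overline T(\Sigma^{n}\FF_p)=0$, which is a direct computation. So the obstacle is exactly the exactness of $T$, which is equivalent to the assertion that each $H^{*}(B(\ZZ/p)^{k};\FF_p)$ is injective in $\mathcal U$ (for then $N\mapsto N\otimes H^{*}(BC_p;\FF_p)$ preserves injectives, hence its left adjoint $T$ is exact) — the Lannes--Zarati injectivity theorem. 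Proving this is the real content: it is obtained by a delicate study of the relevant $\mathrm{Ext}$ groups in $\mathcal U$, producing a horizontal vanishing line in an auxiliary spectral sequence built from the Steenrod algebra (Miller), or alternatively deduced from the Segal conjecture for $C_p$ via equivariant stable homotopy (Carlsson). I do not expect any way around such an input; and the reduction of the nilpotent case to the simply connected one, alluded to above, is a routine Postnikov-tower argument, with some care needed for the $\lim^{1}$-terms governing $\pi_0$ and $\pi_1$ of the intervening mapping spaces.
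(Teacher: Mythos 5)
This statement is Miller's Theorem~C, which the paper imports by citation rather than proving, so there is no internal proof to compare against; the question is only whether your sketch would constitute a proof. It does not: it is a (correct, and correctly organized) reduction of the theorem to the assertion that $T$ is exact, equivalently that $H^*(B(\ZZ/p)^k;\FF_p)$ is injective in the category $\mathcal{U}$ of unstable modules — and, as you yourself say, that assertion is the entire content of the Sullivan conjecture. Miller's vanishing-line computation of $\mathrm{Ext}_{\mathcal U}$ (or Carlsson's route through the Segal conjecture, or the Lannes--Zarati/Lannes--Schwartz injectivity arguments) is not something that can be waved at; without it the proposal proves nothing. So the verdict is: sound architecture, with the load-bearing wall explicitly left out.

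Two smaller points on the architecture itself. First, the parenthetical reduction ``to the case that $Y$ is a finite complex, recovering the general finite-dimensional case by a limiting argument'' is suspect as stated: $BC_p$ is not compact, so $\Map(BC_p,-)$ does not commute with the skeletal filtration of $Y$, and a finite-dimensional space need not be a finite complex. Fortunately this reduction is also unnecessary — the correct use of finite-dimensionality is exactly the one you give afterwards, namely that $\widetilde H^*(Y;\FF_p)$ is bounded above, hence locally finite as an unstable module, and $\bar T$ kills locally finite modules once exactness is known. Second, the application of Lannes's theorem identifying $H^*\Map(BC_p,Y)$ with $T(H^*Y)$ carries its own hypotheses (freeness/finite-type conditions on $TM$, and control of $\pi_0$ and $\pi_1$ of the mapping space in the non-simply-connected nilpotent case); in the standard treatments this is handled by running the argument up a principal refinement of the Postnikov tower with attention to the relevant $\lim^1$ terms, as you indicate, but it is genuine work rather than pure formality.
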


We note that Miller actually proves an integral statement, but it quickly reduces to the result above.

The Sullivan conjecture was then generalized to show that many other homotopy theoretic moduli spaces also do not admit many maps to such $X$. For example, in \cite{zabrodsky1987phantom}, it was shown that the above result holds when $BC_p$ is replaced by a finite Postnikov towers with torsion homotopy groups, and the same result holds for $BG$ for $G$ a compact Lie group follows from \cite{jackowski1992homotopy}.

The goal of this paper is to prove a generalization of \Cref{thm:miller} where $BC_p$ may be replaced by $\Omega^{\infty} E$ for any spectrum $E$.

Given an abelian group $A$, we use $(A/\mathrm{tors})^{\wedge}_p$ to denote the $p$-completion of the quotient of $A$ by its torsion subgroup, which is always a free $p$-complete abelian group.

\begin{theorem}\label{thm:spectralsullivanintro}
	Let $X$ be an infinite loop space, and consider the map $g:X \to \coprod_{\pi_0X}B((\pi_1X/\mathrm{tors})^{\wedge}_p)$ which is a bijection on components and on each component induces the map $\pi_1X \to (\pi_1X/\mathrm{tors})^{\wedge}_p$ on fundamental groups. Then for any nilpotent finite dimensional $p$-complete space $Y$, $g$ induces an equivalence on mapping spaces into $Y$.
\end{theorem}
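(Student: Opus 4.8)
Write $X\simeq\Omega^\infty E$ with $E$ connective, and decompose over $\pi_0X=\pi_0E$: each component of $X$ is $\Omega^\infty\tau_{\ge1}E$ and the matching component of the target is $B((\pi_1E/\mathrm{tors})^\wedge_p)$, so it suffices to assume $E$ is $1$-connective and show $g\colon\Omega^\infty E\to B((\pi_1E/\mathrm{tors})^\wedge_p)$ induces an equivalence on $\Map(-,Y)$. This $g$ is $\Omega^\infty$ of the map of connective spectra $\phi\colon E\to\Sigma H((\pi_1E/\mathrm{tors})^\wedge_p)$ that is the canonical map on $\pi_1$ after killing torsion and $p$-completing; in particular $g$ is an infinite loop map and sits in a fibration $\Omega^\infty\mathrm{fib}(\phi)\to\Omega^\infty E\xrightarrow{g} B((\pi_1E/\mathrm{tors})^\wedge_p)$ of infinite loop spaces.

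\textbf{Step 2: A null class and Miller's input.} Let $\mathcal N$ be the class of pointed spaces $V$ with $\Map_*(V,Y)\simeq*$ for all nilpotent finite-dimensional $p$-complete $Y$; by \Cref{thm:miller} (and its standard extension to finite groups) $B\pi\in\mathcal N$ for $\pi$ finite. The class $\mathcal N$ is closed under homotopy colimits of diagrams valued in $\mathcal N$, under $V\mapsto V_{hG}$ for $G$ finite, and — along fibrations with connected base — under total spaces and under passage to the base given the total space and fibre. These closures already put classifying spaces of torsion groups and of $\mathbb Q$-vector spaces, all $K(\mathbb Z/p^k,n)$, and (via the extended-power/James filtrations, which are built from cofibre sequences and colimits) the free infinite loop space $\Omega^\infty\widetilde\Sigma^\infty BC_p$ into $\mathcal N$. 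Hence, if $\mathcal S$ is the smallest subcategory of spectra containing $\widetilde\Sigma^\infty BC_p$ and closed under colimits and extensions, then $\Omega^\infty E\in\mathcal N$ for every $E\in\mathcal S$: the class of such $E$ is itself closed under colimits and extensions — here one uses that every object of $\mathcal S$ is $1$-connective, so all relevant infinite loop spaces are connected and the fibration closure of $\mathcal N$ applies — and it contains the generator.

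\textbf{Step 3: The crux — identifying $\mathcal S$ with a Moore-spectrum category.} To deduce the theorem one needs a concrete handle on $\mathcal S$, and the key technical result is that $\mathcal S$ equals the subcategory generated under colimits and extensions by a Moore spectrum (concretely $\Sigma\mathbb S/p$). One inclusion is formal: the skeletal filtration of $BC_p$ exhibits $\widetilde\Sigma^\infty BC_p$ as an iterated extension of even suspensions of $\Sigma\mathbb S/p$, since $(BC_p)^{(2k)}/(BC_p)^{(2k-2)}$ is the $(2k-1)$-fold suspension of the mod-$p$ Moore space. \textbf{The reverse inclusion is the heart of the matter and, I expect, the main obstacle}: one must reconstruct the single Moore spectrum $\Sigma\mathbb S/p$ from $\widetilde\Sigma^\infty BC_p$ using only colimits and extensions — a deliberately rigid toolkit offering neither retracts of arbitrary maps nor desuspensions. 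I would attack this by exploiting the structure of $\Sigma^\infty_+BC_p$ as the suspension spectrum of a topological abelian group (an augmented $\mathbb E_\infty$-ring with compatible comultiplication, carrying transfers and power operations) and by comparing $BC_p$ with $BC_{p^\infty}$, with $BC_p\times BC_p$, and with stunted lens spaces, so as to split off the bottom Moore cell up to the allowed operations and then build it on the nose via a colimit presentation.

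\textbf{Step 4: Conclusion.} Granting $\mathcal S=\langle\Sigma\mathbb S/p\rangle$, this category is transparent — essentially the $1$-connective $p$-torsion spectra, generated from $\Sigma\mathbb S/p$ together with the familiar $\Sigma\mathbb S/p^n$, $\Sigma\mathbb S\mathbb Z/p^\infty$, $\Sigma H\mathbb Z/p^k$ — and Step 2 gives $\Omega^\infty E\in\mathcal N$ for all such $E$. One then checks that, after $p$-completing and removing the rational and prime-to-$p$ parts of $\pi_*E$ and of $\pi_1E$ (which are absorbed by the $\mathbb F_p$-acyclic and Eilenberg–MacLane bookkeeping of Step 2), the stable data measuring the failure of $g$ to be an equivalence lies in $\mathcal S$; transporting this through $\Omega^\infty$ and chasing the fibration of Step 1 — working componentwise so as to sidestep the disconnectedness of $\mathrm{fib}(\phi)$ via the associated covering-space picture rather than a naive Zabrodsky argument — yields the equivalence $\Map(B((\pi_1X/\mathrm{tors})^\wedge_p),Y)\xrightarrow{\ \sim\ }\Map(X,Y)$.
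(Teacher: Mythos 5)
Your architecture matches the paper's: reduce to a stable statement that the subcategory of spectra generated under colimits and extensions by $\Sigma^{\infty}BC_p$ contains $\Sigma\SP/p$, transfer this to an unstable nullification statement via the Bousfield-type comparison of stable and unstable classes (your Step 2 is essentially \Cref{prop:cnunstablebousclasss}), and then identify the localization of $\Omega^{\infty}E$ with the union of $p$-adic tori. But the proposal has a genuine gap exactly where you flag one: the reverse inclusion in Step 3 is the paper's central theorem (\Cref{thm:cnbousfieldclassmain}), and you do not prove it — you only propose to ``split off the bottom Moore cell'' by comparing $BC_p$ with $BC_{p^{\infty}}$, $BC_p\times BC_p$, and stunted lens spaces. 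That plan is not obviously executable: cell-splitting arguments naturally produce retracts and desuspensions, and while retracts are in fact available (a retract is a sequential colimit of the idempotent, so your stated worry about retracts is misplaced), desuspensions are not, and no amount of stable splitting of lens spectra by itself manufactures $\Sigma\SP/p$ from $\Sigma^{\infty}BC_p$ using only colimits and extensions. A proof is required, not a strategy, and this is the step on which the whole theorem rests.

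For comparison, the paper's argument is quite different from your sketch and is worth internalizing: one passes to the presentable localization $P_{\Sigma^{\infty}BC_p}\Sp_{\geq 0,p}$ killing $\Sigma^{\infty}BC_p$, whose kernel is exactly the colimit-and-extension closure by \Cref{lemma:kerlocalization}. Transfers and induction on $p$-groups show $\Sigma^{\infty}_+BG\to \SP$ becomes an equivalence there for all finite $G$ (\Cref{lemma:finitegroup}); taking $G=\Sigma_n$ shows the free $\EE_{\infty}$-ring on a degree-$0$ class becomes polynomial (\Cref{lemma:freestrict}); applying the bar construction twice yields $\SP\{x_2\}\simeq \Sigma^{\infty}_+\CC\PP^{\infty}$ in the localization, while $\Sigma^{\infty}\CC\PP^{\infty}\simeq\Sigma^{\infty}B(\QQ_p/\ZZ_p)=0$ there as a filtered colimit of finite groups. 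Hence $\Sigma^2\SP=0$, the localized category is $1$-truncated, and $\Sigma\SP/p=\tau_{\leq 1}\Sigma^{\infty}BC_p=0$. Your instinct to involve $BC_{p^{\infty}}$ points in the right direction, but the actual mechanism is the $\EE_{\infty}$-algebra/bar-construction argument, not a cell-by-cell comparison. Separately, your Step 4 is also looser than it should be: the clean statement is that the fiber of $X\to B(\pi_1X/\mathrm{tors})$, after replacing it by $\fib(F\to F[\frac{1}{p}])$ (an $\FF_p$-homology equivalence), is $\Omega^{\infty}$ of a connected spectrum killed by inverting $p$, hence lies in $\langle BC_p\rangle^{\un}$; this is \Cref{thm:main} and avoids the ``covering-space picture'' you allude to.
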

%

Since $(\pi_1X/\mathrm{tors})^{\wedge}_p$ is a free $p$-complete abelian group, the target of the map $g$ above can be thought of as a union of $p$-adic tori indexed by $\pi_0X$. Then \Cref{thm:spectralsullivanintro} informally says that any map from an infinite loop space to a nilpotent finite dimensional $p$-complete space factors canonically through a union of $p$-adic tori.

As a consequence of the above theorem, actions of infinite loop spaces on finite dimensional spaces are quite restricted:
\begin{corollary}\label{cor:action}
	Any action of an infinite loop space $X$ on a $p$-complete nilpotent finite dimensional space $Y$ canonically factors through an action of a group that is an extension of a discrete abelian group by a $p$-adic torus.
\end{corollary}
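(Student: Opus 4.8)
The plan is to recast an action as an $E_1$-map and then bootstrap \Cref{thm:spectralsullivanintro} from plain mapping spaces to $E_1$-mapping spaces. Write $X=\Omega^\infty E$ for a connective spectrum $E$, so that $\pi_0 X$ is a discrete abelian group and $A:=(\pi_1 X/\mathrm{tors})^{\wedge}_p$ is free $p$-complete abelian, and set $G:=\coprod_{\pi_0 X}BA$ with its natural grouplike structure (it is $\Omega^\infty$ of a connective spectrum with homotopy groups $\pi_0 X$ in degree $0$ and $A$ in degree $1$). As a grouplike space $G$ is an extension $1\to BA\to G\to\pi_0 X\to 1$ of the discrete abelian group $\pi_0 X$ by the $p$-adic torus $BA=B((\pi_1 X/\mathrm{tors})^{\wedge}_p)$, so it has the form demanded in the statement; moreover the map $g$ of \Cref{thm:spectralsullivanintro} refines to a map of grouplike $E_\infty$-spaces $X\to G$ (the target is $1$-truncated, so $g$ is pinned down by its effect on $\pi_0$ and $\pi_1$, which agrees with an obvious $E_\infty$-map), and hence deloops to $Bg:BX\to BG$. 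Since an action of $X$ on $Y$ is exactly an $E_1$-map $\phi:X\to\mathrm{Aut}(Y)$, equivalently a pointed map $BX\to B\mathrm{Aut}(Y)$, it suffices to prove that restriction along $g$ is an equivalence $\Map_{E_1}(G,\mathrm{Aut}(Y))\xrightarrow{\sim}\Map_{E_1}(X,\mathrm{Aut}(Y))$; the factored $G$-action is then the essentially unique preimage of $\phi$.

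The first step is that $g$ is an equivalence on \emph{plain} mapping spaces into $\mathrm{Aut}(Y)$, even though $\mathrm{Aut}(Y)$ is very far from finite dimensional. Let $\mathcal D$ be the class of spaces $W$ for which $g^*:\Map(G,W)\to\Map(X,W)$ is an equivalence. By \Cref{thm:spectralsullivanintro}, $\mathcal D$ contains every nilpotent finite dimensional $p$-complete space; it is closed under limits, under passing to unions of connected components (here one uses that $g$ is a bijection on $\pi_0$), and under $\Map(V,-)$ for an arbitrary space $V$, since
\[
\Map(X,\Map(V,W))\simeq\Map(V,\Map(X,W))\xrightarrow{\sim}\Map(V,\Map(G,W))\simeq\Map(G,\Map(V,W)).
\]
Hence $\Map(Y,Y)\in\mathcal D$, so $\mathrm{Aut}(Y)\in\mathcal D$ as well (being a union of components of $\Map(Y,Y)$), and likewise $\Map(V,\mathrm{Aut}(Y))\in\mathcal D$ for every $V$. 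Writing $g^{\times m}:X^{\times m}\to G^{\times m}$ as an iterated composite of maps $\mathrm{id}\times g\times\mathrm{id}$ (replacing one factor $X$ by $G$ at a time) and applying the above then shows that $g^{\times m}$ is a $\Map(-,\mathrm{Aut}(Y))$-equivalence for all $m\ge 0$.

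The second step upgrades this to $E_1$-mapping spaces. Using the bar construction, $\Map_{E_1}(X,\mathrm{Aut}(Y))$ is computed as a limit, natural in $X$, of spaces of the form $\Map(X^{\times m},\mathrm{Aut}(Y))$; since $g$ is an $E_1$-map, the first step identifies this diagram levelwise with the corresponding one for $G$, yielding $\Map_{E_1}(G,\mathrm{Aut}(Y))\xrightarrow{\sim}\Map_{E_1}(X,\mathrm{Aut}(Y))$. Thus every action of $X$ on $Y$ lifts, uniquely up to a contractible space of choices, to an action of $G$; naturality of $g$ and of the equivalences above in $Y$ makes the factorization canonical, and the first paragraph identifies $G$ as an extension of a discrete abelian group by a $p$-adic torus.

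The hard part is this second step, which is where one uses anything beyond \Cref{thm:spectralsullivanintro}: one cannot argue directly at the level of $B\mathrm{Aut}(Y)$, since $B\mathrm{Aut}(Y)$ need not lie in $\mathcal D$ (for example $\pi_0\mathrm{Aut}(Y)$ may contain $p$-torsion), so one must unwind the $E_1$-structure through the bar construction and track the coherences — in particular that $g$ may be taken to be an $E_1$-map and that the limit presentation of $\Map_{E_1}$ is functorial along $g$. Everything else is formal manipulation of mapping spaces, with all the substantive content carried by \Cref{thm:spectralsullivanintro}, hence by Miller's theorem.
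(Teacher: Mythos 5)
Your proof is correct, and its skeleton is the same as the paper's: recast the action as an $\EE_1$-map from $X$ into the endomorphism/automorphism space of $Y$, observe that this target inherits locality from $Y$ because it is built from $Y$ by limits (and by passing to unions of components), and then push the factorization through the $\EE_1$-structure. Where you diverge is in how that last multiplicative step is handled. The paper invokes \Cref{thm:main}, which identifies $g$ as the unit of the presentable localization $L$, together with the fact that such localizations of spaces are symmetric monoidal for the cartesian product; this makes the $\EE_1$-factorization through $LX=\coprod_{\pi_0X}B((\pi_1X/\mathrm{tors})^{\wedge}_p)$ a one-line consequence. You instead work only from the mapping-space statement of \Cref{thm:spectralsullivanintro}, verify by hand that $g^{\times m}$ induces equivalences on $\Map(-,\mathrm{Aut}(Y))$ via the closure properties of your class $\mathcal D$, and then run the bar-construction/end presentation of $\Map_{\EE_1}$ levelwise. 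This buys self-containedness (you never need to know that $g$ is a localization map, nor that the localization preserves products), at the cost of having to supply the $\EE_1$-refinement of $g$ yourself from the spectrum level and to track the cosimplicial coherences; your justification of that refinement (the target is $1$-truncated, so $g$ is detected on $\pi_0$ and $\pi_1$) is adequate. Your closure-under-components step correctly uses that $g$ is surjective on $\pi_0$, and the passage from $\Map(Y,Y)$ to $\mathrm{Aut}(Y)$ is harmless. In short: same strategy, with the paper's appeal to the monoidality of $L$ replaced by an explicit, slightly longer, but valid argument.
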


\begin{remark}
	We also prove integral variants of the above results in \Cref{thm:integralspecsull} and \Cref{cor:intaction}.
\end{remark}


To prove \Cref{thm:spectralsullivanintro}, we consider the presentable localization of spaces where we $\FF_p$-localize and also apply the Bousfield--Dror Farjoun nullification with respect to $BC_p$. Let $L$ denote this localization functor. The Sullivan conjecture implies that nilpotent $p$-complete finite spaces are local in this localization. \Cref{thm:spectralsullivanintro} then follows from the statement that the map $g$ agrees with the natural transformation $X \to LX$.

To prove this, we first show using a well known principle of unstable localizations that because the map $BC_p \to *$ is an $L$-equivalence, so is $\Omega^{\infty}\Sigma^{\infty}BC_p \to *$. The key point is to use this to show that $\Omega^{\infty}\Sigma^{\infty}S^1/p \to *$ is an $L$-equivalence. To do this, it is convenenient to use the notion of a connective Bousfield class, which was first introduced by Bousfield \cite[Section 3]{bousfield1996unstable}.

\begin{definition}\label{def:cnbousfieldclass}
	Let $X$ be a spectrum. The \textit{connective Bousfield class} of $X$ is the subcategory of spectra generated under colimits and extensions by $X$. We use $\langle X\rangle^{s}$ to denote the connective Bousfield class of $X$.
\end{definition}

Now the main ingredient to proving that $g$ agrees with the map $X \to LX$ is the following theorem:

\begin{theorem}\label{thm:cnbousfieldclassmain}
	There is an equality of connective Bousfield classes:
	$$\langle \Sigma\SP/p\rangle^{s} = \langle \Sigma^{\infty}BC_p\rangle^{s}$$
\end{theorem}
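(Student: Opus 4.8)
The plan is to prove the two inclusions $\langle\Sigma^{\infty}BC_p\rangle^{s}\subseteq\langle\Sigma\SP/p\rangle^{s}$ and $\langle\Sigma\SP/p\rangle^{s}\subseteq\langle\Sigma^{\infty}BC_p\rangle^{s}$ separately. Throughout I will use that a connective Bousfield class $\langle X\rangle^{s}$, being closed under colimits and extensions, is also closed under suspension and under cofibers of maps between its objects. It is helpful to also keep in mind the dual description: $\langle X\rangle^{s}$ is the aisle (connective part) of an accessible $t$-structure, and an inclusion $\langle A\rangle^{s}\subseteq\langle B\rangle^{s}$ is equivalent to the implication ``$\pi_{\geq 0}F(B,Y)=0\ \Rightarrow\ \pi_{\geq 0}F(A,Y)=0$'' holding for all spectra $Y$, where $F(-,-)$ denotes the function spectrum. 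The first inclusion is formal; the second is where the content lies.

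For $\langle\Sigma^{\infty}BC_p\rangle^{s}\subseteq\langle\Sigma\SP/p\rangle^{s}$ I would use the filtration of $BC_p$ by its even skeleta. Since the $2k$-cell of the standard CW structure on $BC_p$ is attached to the $(2k-1)$-cell by a degree-$p$ map, one has $\mathrm{sk}_{2k}(BC_p)/\mathrm{sk}_{2k-2}(BC_p)\simeq M(\mathbb{Z}/p,2k-1)$, whose suspension spectrum is $\Sigma^{2k-1}\SP/p=\Sigma^{2k-2}(\Sigma\SP/p)$, a suspension of $\Sigma\SP/p$ and hence a member of $\langle\Sigma\SP/p\rangle^{s}$. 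Therefore each $\Sigma^{\infty}\mathrm{sk}_{2k}(BC_p)$ is an iterated extension of such spectra, so lies in $\langle\Sigma\SP/p\rangle^{s}$; and $\Sigma^{\infty}BC_p=\mathrm{colim}_{k}\,\Sigma^{\infty}\mathrm{sk}_{2k}(BC_p)$ lies there as well.

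For the reverse inclusion it suffices to show $\Sigma\SP/p\in\langle\Sigma^{\infty}BC_p\rangle^{s}=:\mathcal{C}$. Since $\Sigma^{\infty}\mathrm{sk}_{2}(BC_p)=\Sigma\SP/p$, the cofiber sequence $\Sigma\SP/p\to\Sigma^{\infty}BC_p\to\Sigma^{\infty}(BC_p/\mathrm{sk}_{2}(BC_p))$ rotates to $\Sigma^{-1}\Sigma^{\infty}(BC_p/\mathrm{sk}_{2}(BC_p))\to\Sigma\SP/p\to\Sigma^{\infty}BC_p$, which exhibits $\Sigma\SP/p$ as an extension of $\Sigma^{\infty}BC_p\in\mathcal{C}$ by the once-desuspended quotient $\Sigma^{-1}\Sigma^{\infty}(BC_p/\mathrm{sk}_{2}(BC_p))$. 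So it is enough to see that this desuspended quotient lies in $\mathcal{C}$. If $\Sigma^{\infty}(BC_p/\mathrm{sk}_{2}(BC_p))$ were equivalent to $\Sigma^{2}\Sigma^{\infty}BC_p$ this would be immediate, but it is not (already the Steenrod module structures on the bottom two cells disagree). Instead I would bring in $p$-primary James periodicity for stunted lens spectra --- a consequence of $[L]-1$ being nilpotent in the $K$-theory of finite lens spaces --- which for each $m\geq 1$ and each sufficiently large $j$ (depending on $m$) furnishes an equivalence $\Sigma^{2p^{j}}\Sigma^{\infty}\mathrm{sk}_{2m}(BC_p)\simeq\Sigma^{\infty}(\mathrm{sk}_{2m+2p^{j}}(BC_p)/\mathrm{sk}_{2p^{j}}(BC_p))$. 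These identify the shifted quotients of $BC_p$ that occur above with suspensions of truncations of $BC_p$ itself, and, assembled into a colimit over $j$, should present $\Sigma^{-1}\Sigma^{\infty}(BC_p/\mathrm{sk}_{2}(BC_p))$ as a colimit and extension of objects of $\mathcal{C}$.

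The main obstacle is precisely this assembly. Because the value of $j$ required grows with $m$, no single periodicity equivalence identifies the full quotient $\Sigma^{\infty}(BC_p/\mathrm{sk}_{2}(BC_p))$ with a suspension of $\Sigma^{\infty}BC_p$, and simply iterating the reduction of the previous paragraph does not terminate, since each pass desuspends and so loses connectivity. One has to organize the colimit over the periodicity level $j$ so that the finitely-many negatively-connective ``error terms'' introduced at each finite stage are absorbed in the limit --- equivalently, in the orthogonal-complement formulation, one has to control the Milnor $\varprojlim^{1}$ tower of the spectra $F(\Sigma^{\infty}\mathrm{sk}_{2k}(BC_p),Y)$ using the periodicity. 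I expect getting this bookkeeping right to be the crux of the argument.
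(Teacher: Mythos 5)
Your first inclusion is correct and is exactly the argument the paper has in mind (its ``homology decomposition''): the even skeleta of $BC_p$ have subquotients that are suspensions of $\Sigma\SP/p$, so $\Sigma^{\infty}BC_p\in\langle\Sigma\SP/p\rangle^{s}$. The reverse inclusion, however, is where all the content of the theorem lies, and there your proposal has a genuine gap --- one you candidly flag yourself. Two concrete problems. First, a circularity: James periodicity only identifies \emph{finite} stunted pieces with suspensions of \emph{finite} skeleta $\mathrm{sk}_{2m}(BC_p)$, and membership of those finite skeleta in $\mathcal{C}=\langle\Sigma^{\infty}BC_p\rangle^{s}$ is not available in advance --- the smallest one, $\Sigma^{\infty}\mathrm{sk}_{2}(BC_p)=\Sigma\SP/p$, is precisely the object whose membership you are trying to prove, and closure under colimits and extensions does not hand you finite subquotients of a generator. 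Second, the assembly over $j$ cannot be carried out as described: because the required period grows with the width, there is no equivalence $\Sigma^{\infty}(BC_p/\mathrm{sk}_{2k})\simeq\Sigma^{2k}\Sigma^{\infty}BC_p$ (as you note, already the Steenrod actions on the bottom cells differ), and the infinite stunted lens spectra $\Sigma^{\infty}(BC_p/\mathrm{sk}_{2k})$ for varying $k$ are pairwise inequivalent --- they form the tower familiar from the Segal conjecture. So the ``error terms'' do not disappear in the colimit, and the $\varprojlim^{1}$ issue you defer is not bookkeeping but the entire difficulty. As written, the proposal establishes only the easy inclusion.

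The missing idea in the paper is to abandon cell structures in favor of multiplicative structure. One passes to the localization $P_{\Sigma^{\infty}BC_p}$ of $p$-complete connective spectra (whose kernel is, by \Cref{lemma:kerlocalization}, the connective Bousfield class in question); this localization is symmetric monoidal since its kernel is a tensor ideal. A transfer argument plus induction on $p$-groups shows that $\Sigma^{\infty}_{+}BG\to\Sigma^{\infty}_{+}*$ becomes an equivalence for every finite group $G$; applied to symmetric groups this identifies the free $\EE_{\infty}$-ring on a degree-$0$ class with $\Sigma^{\infty}_{+}\NN$. Running the bar construction twice then gives $\SP\{x_{2}\}\simeq\Sigma^{\infty}_{+}\CC\PP^{\infty}$ in the localized category; but $p$-adically $\CC\PP^{\infty}\simeq B(\QQ_{p}/\ZZ_{p})$ is a filtered colimit of classifying spaces of finite groups, so its reduced suspension spectrum vanishes, forcing $\Sigma^{2}\SP=0$. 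Hence the localized category is $1$-truncated and $\Sigma\SP/p=\tau_{\leq1}\Sigma^{\infty}BC_{p}=0$, which yields the hard inclusion. (The paper's second proof instead feeds the Mitchell--Priddy symmetric power filtration of $\ZZ_{(p)}$, whose associated graded pieces are summands of suspensions of $\Sigma^{\infty}B(C_{p}^{\times n})$, into the same localization.) Some such global input is what your sketch is missing.
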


The above theorem is a strengthening of a result announced by Hopkins and Smith \cite{hopkinssmithCPinfty} showing that the Bousfield classes of $\SP/p$ and $\Sigma^{\infty}BC_p$ agree.

%


We also provide an alternative proof of \Cref{thm:cnbousfieldclassmain}, using the symmetric power filtration on $\ZZ$. This proof suggests the following generalization:

\begin{conjecture}\label{question:steinberg}
	Let $L(k)$ be $\Sigma^{-k}\mathrm{Sp}^{p^k}/\mathrm{Sp}^{p^{k-1}}\SP_{(p)}$, where $\mathrm{Sp}^{p^k}$ is the $p^{k}$th symmetric power. Then the connective Bousfield class of $L(k)$ the same as that of some type $k$ finite spectrum.
\end{conjecture}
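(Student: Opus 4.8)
The plan is to imitate the strategy behind \Cref{thm:cnbousfieldclassmain}, which is precisely the $k=1$ case: once one identifies $L(1)$ with the Steinberg summand of $\Sigma^\infty BC_p$ (so that $\langle L(1)\rangle^s = \langle \Sigma^\infty BC_p\rangle^s$) and notes that $\Sigma\SP/p$ is a finite complex of type $1$, \Cref{thm:cnbousfieldclassmain} says exactly that $\langle L(1)\rangle^s$ is the connective Bousfield class of a type-$1$ finite complex. Fixing a finite complex $V$ of type $k$, the goal is then to prove the two inclusions $L(k)\in\langle V\rangle^s$ and $V\in\langle L(k)\rangle^s$ separately; each amounts to realizing one spectrum as an iterated extension and colimit of nonnegative suspensions of the other. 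The governing constraint throughout is that $\langle-\rangle^s$ is closed under colimits and extensions but not under desuspension, so every construction must proceed upward along a cellular filtration. This is what separates the problem from the corresponding equality of ordinary Bousfield classes $\langle L(k)\rangle=\langle V\rangle$, which reduces to the facts --- due to Mitchell and Mitchell--Priddy --- that $L(k)$ is $K(n)$-acyclic for $n<k$ and has nontrivial $K(k)$-homology.

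For the inclusion $L(k)\in\langle V\rangle^s$ I would build $L(k)$ from $V$ cell by cell. The relevant input is that $L(k)$ is connective with degreewise finite mod $p$ homology --- the Steinberg summand of $H_*(B(\ZZ/p)^k;\FF_p)$ --- and is $K(n)$-acyclic for all $n<k$. The statement to prove is a connective analogue of the thick subcategory theorem: a bounded-below spectrum with degreewise finite homology that is $K(n)$-acyclic for $n<k$ lies in $\langle V\rangle^s$. For $k=1$ this is the assertion that $\Sigma^\infty BC_p$ is assembled from suspensions of $\SP/p$, visible from its skeletal filtration. For general $k$ I would produce the needed filtration either from the iterated symmetric-power structure on $L(k)$ or from the cofiber sequences of Takayasu relating $L(k)$, $L(k-1)$, and $\Sigma^\infty B\ZZ/p$, inducting on $k$ and smashing the inductive hypothesis with a type-$1$ complex at each stage.

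The reverse inclusion $V\in\langle L(k)\rangle^s$ carries the real content and is where I expect the main obstacle to lie. One must manufacture a genuine finite type-$k$ complex out of nonnegative suspensions of $L(k)$ using only colimits and extensions, and this requires exhibiting $v_k$-periodicity inside $L(k)$ connectively. The natural source of such structure is the Arone--Mahowald description of the Goodwillie tower of the identity, whose layers on spheres are assembled from the spectra $L(k)$ and carry the $v_k$-self maps governing $v_k$-periodic homotopy; combined with the Hopkins--Smith periodicity theorem, this should locate a type-$k$ self map, hence a type-$k$ finite complex, inside $\langle L(k)\rangle^s$. Exactly as in the passage from the Hopkins--Smith Bousfield-class equality to \Cref{thm:cnbousfieldclassmain}, the difficulty is to carry out this extraction \emph{connectively}, without inverting a suspension, so that the resulting finite complex is manifestly an extension-and-colimit of copies of $L(k)$ rather than merely Bousfield-equivalent to one. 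Managing the interaction between the Steinberg-idempotent splitting of $B(\ZZ/p)^k$ and this self map, uniformly in $k$ and at all primes $p$, is the technical heart of the argument.
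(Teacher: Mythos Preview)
The statement you are attempting to prove is stated in the paper as a \emph{conjecture}, not a theorem: the paper offers no proof, and explicitly presents it as a generalization suggested by the second proof of \Cref{thm:cnbousfieldclassmain}. There is therefore no argument in the paper to compare your proposal against.

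Your proposal is not a proof but a strategic outline, and you are candid about this. The identification of the $k=1$ case with \Cref{thm:cnbousfieldclassmain} is correct, and the division into two inclusions is the natural approach. Your assessment that the inclusion $V\in\langle L(k)\rangle^s$ carries the real content is accurate, and you correctly isolate the obstruction: one must produce a finite type-$k$ complex from $L(k)$ using only colimits and extensions, without desuspending. The Arone--Mahowald and Takayasu inputs you cite are the right tools to bring to bear, but as you yourself note, making the extraction work connectively is precisely the open problem. Your sketch is a reasonable plan of attack on an open conjecture, not a proof, and nothing in the paper goes further than you do.
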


%
%

\subsection*{Acknowledgements}
We would like to thank Shaul Barkan, Robert Burklund, Jesper Grodal, Gijs Heuts, Mike Hopkins, and Vignesh Subramanian for helpful conversations related to this paper. We also thank Gijs Heuts for feedback on an earlier draft of the paper. This work was done while the author was supported by the Clay Research Fellowship.

\section{Connective Bousfield classes}



The goal of this section is to prove \Cref{thm:cnbousfieldclassmain}.

The following is a stable analog of a result of Bousfield \cite[Theorem 9.10]{bousfield1994localization}. It says that the connective Bousfield class of $Z$ only depends on the homology of $Z$ and the `stable' connective Bousfield class of $Z$.
\begin{lemma}\label{lemma:cnsuspension}
	Let $Z$ be a connective spectrum. Then for any $n\geq0$, $\langle Z\rangle ^{s} = \langle \Sigma^n Z \oplus Z\otimes \ZZ\rangle^{s}$
\end{lemma}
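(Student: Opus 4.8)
The plan is to prove the two inclusions separately, the reverse one (rebuilding $Z$ out of $\Sigma^n Z$ and $Z\otimes\ZZ$) being the substantive direction. First I would record two preliminary facts used throughout. (i) $\langle\SP\rangle^s$ equals the subcategory $\mathrm{Sp}_{\geq 0}$ of connective spectra: this subcategory is closed under colimits and extensions and contains $\SP$, so it contains $\langle\SP\rangle^s$, while conversely every connective spectrum is a cell colimit of spheres $\Sigma^k\SP$ with $k\geq 0$, so $\mathrm{Sp}_{\geq 0}\subseteq\langle\SP\rangle^s$. (ii) A tensoring principle, which is the workhorse: if $\mathcal D$ is any subcategory of spectra closed under colimits and extensions and $W\in\mathcal D$, then $W\otimes V\in\mathcal D$ for every connective spectrum $V$. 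Indeed $\{V : W\otimes V\in\mathcal D\}$ is closed under colimits and extensions, since $W\otimes(-)$ preserves colimits and cofiber sequences, and it contains $\SP$ because $W\otimes\SP\simeq W$; by (i) it therefore contains $\mathrm{Sp}_{\geq 0}$.

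With these in hand, the easy inclusion $\langle\Sigma^n Z\oplus Z\otimes\ZZ\rangle^s\subseteq\langle Z\rangle^s$ is immediate: $\Sigma^n Z$ is a colimit of copies of $Z$, and $Z\otimes\ZZ\in\langle Z\rangle^s$ by the tensoring principle applied to $\mathcal D=\langle Z\rangle^s$ with $V=\ZZ$, so the generator of the left-hand class already lies in the right-hand one.

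For the reverse inclusion, set $\mathcal C:=\langle\Sigma^n Z\oplus Z\otimes\ZZ\rangle^s$. Being closed under colimits, $\mathcal C$ is closed under retracts, so $\Sigma^n Z\in\mathcal C$ and $Z\otimes\ZZ\in\mathcal C$ individually; hence, by the tensoring principle, $\Sigma^n Z\otimes V\in\mathcal C$ and $(Z\otimes\ZZ)\otimes V\in\mathcal C$ for every connective $V$. I would then prove by downward induction on $m$, from $m=n$ to $m=0$, that $\Sigma^m Z\in\mathcal C$; the case $m=n$ is done and $m=0$ is the assertion. For the inductive step, given $\Sigma^m Z\in\mathcal C$ with $m\geq 1$, tensor the Postnikov cofiber sequence $\tau_{\geq 1}\SP\to\SP\to\ZZ$ with $\Sigma^{m-1}Z$ to obtain a cofiber sequence
\[
\Sigma^{m}Z\otimes\bigl(\Sigma^{-1}\tau_{\geq 1}\SP\bigr)\;\longrightarrow\;\Sigma^{m-1}Z\;\longrightarrow\;(Z\otimes\ZZ)\otimes\Sigma^{m-1}\SP .
\]
Now $\Sigma^{-1}\tau_{\geq 1}\SP$ is connective, and $\Sigma^{m-1}\SP$ is connective since $m\geq 1$, so by the previous sentence both outer terms lie in $\mathcal C$; closure of $\mathcal C$ under extensions then forces the middle term $\Sigma^{m-1}Z$ into $\mathcal C$. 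Combining this with the easy inclusion gives the claimed equality.

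I do not expect a deep obstacle here; the one place to be careful is the bookkeeping of connectivity that makes the tensoring principle applicable — in particular the observation that $\Sigma^{-1}\tau_{\geq 1}\SP$ is connective, which is precisely what lets the degree shift in the statement be absorbed — together with the discipline of only ever invoking closure under colimits and extensions and never under fibers, since $\mathcal C$ is not closed under desuspension. As an alternative to the downward induction one could run a single cofiber sequence $\Sigma^n Z\otimes(\Sigma^{-n}\tau_{\geq n}\SP)\to Z\to Z\otimes\tau_{\leq n-1}\SP$, noting that $\langle\ZZ\rangle^s$ contains every connective $H\ZZ$-module so that $Z\otimes\tau_{\leq n-1}\SP\in\mathcal C$ via the finite Postnikov tower of $\tau_{\leq n-1}\SP$; this trades the induction for a slightly larger input, and either route works.
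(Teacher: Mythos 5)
Your proof is correct and follows essentially the same route as the paper's: both hinge on the cofiber sequence $Z\otimes\tau_{\geq1}\SP\to Z\to Z\otimes\ZZ$ together with the observation that $\tau_{\geq1}\SP$ is $1$-connective, so the fiber term lies in the class generated by $\Sigma Z$. Your explicit tensoring principle and downward induction on the suspension degree are just a more detailed packaging of the paper's reduction to the case $n=1$.
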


\begin{proof}
	Clearly $\langle Z\rangle ^{s} \supset \langle \Sigma^n Z \oplus Z\otimes \ZZ\rangle^{s}$. For the other inclusion, it is clear that by induction on $n$, it suffices to prove it for $n=1$. It is enough to show that the fiber of the map $Z \to Z\otimes \ZZ$ is in $\langle \Sigma Z\rangle^{s}$. But this is $Z\otimes \tau_{\geq1}\SP$, and $\tau_{\geq1}\SP$ is $1$-connective, so is built under colimits from $\Sigma \SP$.
\end{proof}

The goal of this section is to prove \Cref{thm:cnbousfieldclassmain}. We begin by reformulating connective Bousfield classes in terms of localizations:

%
%

\begin{definition}
	Let $C$ be a presentable category and let $f \in C$ be a morphism. We define $L_f:C \to L_fC$ to be the presentable localization that inverts the morphism $f$. In the case that $f$ is a map $X \to *$, we write $P_X:C \to P_XC$. 
\end{definition}

\begin{lemma}\label{lemma:kerlocalization}
	Let $C$ be a presentable stable 
	category. Then the collection $\{Z|P_XZ=0\}$ is the smallest subcategory generated under colimits and extensions by $X$. 
\end{lemma}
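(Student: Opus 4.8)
The plan is to prove the two inclusions separately: the inclusion $\mathcal D\subseteq\{Z:P_XZ=0\}$ is formal, while the reverse uses a cellular construction of $P_X$. Throughout write $\mathcal D$ for the subcategory generated by $X$ under colimits and extensions, i.e.\ the smallest full subcategory of $C$ containing $X$ and closed under these operations, and recall that in the stable category $C$ the terminal object $*$ is the zero object $0$, so $P_X$ is the presentable localization inverting $X\to 0$.

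For $\mathcal D\subseteq\{Z:P_XZ=0\}$ it suffices to check that the right-hand side is one of the subcategories in play: it contains $X$ (since $P_XX\simeq P_X0\simeq 0$), it is closed under colimits (since $P_X$, being a left adjoint, preserves them), and it is closed under extensions, because given a cofiber sequence $A\to B\to C$ with $P_XA\simeq 0$, applying the colimit-preserving functor $P_X$ to the pushout square exhibiting $C\simeq B\sqcup_A 0$ and using $P_XA\simeq 0\simeq P_X0$ yields $P_XC\simeq P_XB$, so if also $P_XC\simeq 0$ then $P_XB\simeq 0$. Since $\mathcal D$ is the smallest such subcategory, the inclusion follows.

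For $\{Z:P_XZ=0\}\subseteq\mathcal D$ I would, for each $Z\in C$, build a cofiber sequence $F\to Z\to N$ with $N$ being $X$-null (equivalently $P_X$-local) and with $F\in\mathcal D$, via the stable analogue of the small object argument used to construct nullifications (Bousfield, Dror Farjoun). Choose a regular cardinal $\kappa$ with $X$, hence every $\Sigma^nX$, $\kappa$-compact in $C$ — possible since $C$ is presentable — and form the $\kappa$-indexed tower $Z=W_0\to W_1\to\cdots$, where $W_{\alpha+1}=\mathrm{cofib}\!\bigl(\textstyle\bigoplus_{n\ge0}\bigoplus_{[\Sigma^nX,W_\alpha]}\Sigma^nX\to W_\alpha\bigr)$ cones off every map from a nonnegative shift of $X$, and $W_\alpha=\mathrm{colim}_{\beta<\alpha}W_\beta$ at limit stages; set $N=W_\kappa$. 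By $\kappa$-compactness, every map $\Sigma^nX\to N$ factors through some $W_\alpha$, hence is nullhomotopic already in $W_{\alpha+1}$ and in $N$, so $N$ is $X$-null. Let $F_\alpha=\mathrm{fib}(Z\to W_\alpha)$. Then $F_0\simeq 0\in\mathcal D$; the standard fiber sequence $\mathrm{fib}(Z\to W_\alpha)\to\mathrm{fib}(Z\to W_{\alpha+1})\to\mathrm{fib}(W_\alpha\to W_{\alpha+1})$ attached to $Z\to W_\alpha\to W_{\alpha+1}$, together with $\mathrm{fib}(W_\alpha\to W_{\alpha+1})\simeq\bigoplus\Sigma^nX\in\mathcal D$, exhibits $F_{\alpha+1}$ as an extension of objects of $\mathcal D$, so $F_{\alpha+1}\in\mathcal D$; and at a limit stage $\alpha$ one has $F_\alpha\simeq\mathrm{colim}_{\beta<\alpha}F_\beta$ because a filtered colimit of cofiber sequences is again a cofiber sequence, so no exactness hypothesis on filtered colimits in $C$ is required. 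Hence $F:=F_\kappa=\mathrm{fib}(Z\to N)$ lies in $\mathcal D$.

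Assembling: since $F\in\mathcal D\subseteq\{Z:P_XZ=0\}$ by the first part, applying the colimit-preserving functor $P_X$ to $F\to Z\to N$ shows $Z\to N$ is a $P_X$-equivalence; as $N$ is $P_X$-local, the map $Z\to N$ is the $P_X$-localization, so $N\simeq P_XZ$ and $F\simeq\mathrm{fib}(Z\to P_XZ)$. In particular, if $P_XZ\simeq 0$ then $F\simeq Z$, whence $Z\in\mathcal D$, completing the proof. The main obstacle is the third paragraph: setting up the transfinite nullification tower correctly, checking it terminates using that objects of a presentable category are eventually compact, and verifying that each stage's fiber stays in $\mathcal D$. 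An alternative, trading this for a different technical input, is to show directly that $\mathcal D$ is a presentable subcategory of $C$, so that the inclusion $\mathcal D\hookrightarrow C$ admits a right adjoint $R$, and then to check — using the adjunction and full faithfulness of the inclusion — that $\mathrm{cofib}(RZ\to Z)$ is $X$-null with fiber $RZ\in\mathcal D$, again identifying $\mathrm{fib}(Z\to P_XZ)\simeq RZ$.
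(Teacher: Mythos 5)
Your proof is correct and takes essentially the same approach as the paper's: the easy inclusion is verified by checking that $\{Z \mid P_XZ=0\}$ contains $X$ and is closed under colimits and extensions, and the hard inclusion is obtained by running the transfinite small object argument that cones off maps from $\Sigma^n X$ ($n\geq 0$) and observing that the fiber of $Z \to W_\alpha$ stays in the generated subcategory at each stage. The paper states this in three sentences; you have simply carried out the details (termination via $\kappa$-compactness, the limit-stage colimit of fiber sequences) explicitly and correctly.
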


\begin{proof}
	It is easy to see that the collection of objects with $P_X(Y)=*$ is closed under extensions and colimits and contains $X$ by assumption. To prove the other inclusion, we recall that $P_X(Y)$ can be computed using a small object argument, by transfinitely pushing out along homotopy classes of maps from $\Sigma^nX\to *$ for $n\geq 0$. By taking the fiber of the map from $Y$ at each step of this process, we see the claim.
\end{proof}

%

We use $\Sp_{\geq0,p}$ to denote the category of $p$-complete connective spectra. Our goal is to study the localization $P_{\Sigma^{\infty}BC_p}\Sp_{\geq0,p}$.

%

\begin{lemma}\label{lemma:finitegroup}
	In $P_{\Sigma^{\infty}BC_p}\Sp_{\geq0,p}$, the map $\Sigma^{\infty}_+BG \to \Sigma^{\infty}_+*$ is an equivalence for every finite group $G$.
\end{lemma}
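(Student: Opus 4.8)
\emph{Proof plan.} The plan is to apply \Cref{lemma:kerlocalization} with $C=\Sp_{\geq 0,p}$: the map $\Sigma^{\infty}_+BG\to\Sigma^{\infty}_+*$ becomes an equivalence after $P_{\Sigma^{\infty}BC_p}$ precisely when its fiber lies in the subcategory of $\Sp_{\geq 0,p}$ generated under colimits and extensions by $\Sigma^{\infty}BC_p$. Since the basepoint of $BG$ splits $\Sigma^{\infty}_+BG\to\SP$, that fiber is the (reduced) suspension spectrum $\Sigma^{\infty}BG$, so I would show that $\Sigma^{\infty}BG$ lies in this class. For $G$ finite, $\Sigma^{\infty}BG$ and all the spectra produced below are connective with finite homotopy groups, so passing to $p$-completions commutes with the homotopy orbits, cofibers and retracts that occur, and one may argue inside $\Sp$ with the closure $\langle\Sigma^{\infty}BC_p\rangle^{s}$ without loss.

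\textbf{Reduction to $p$-groups.} Let $P\leq G$ be a Sylow $p$-subgroup. The Becker--Gottlieb transfer of the fibration $BP\to BG$, whose fiber $G/P$ is a finite set with $\chi(G/P)=[G:P]$, is a map $\tau\colon\Sigma^{\infty}_+BG\to\Sigma^{\infty}_+BP$ with $\Sigma^{\infty}_+(BP\to BG)\circ\tau=[G:P]\cdot\mathrm{id}$, and $\tau$ is compatible with the basepoint sections up to the factor $[G:P]$; hence after inverting $[G:P]$ (automatic $p$-adically) the reduced suspension spectrum $\Sigma^{\infty}BG$ becomes a retract of $\Sigma^{\infty}BP$. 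As $\langle\Sigma^{\infty}BC_p\rangle^{s}$ is closed under colimits it is closed under retracts, so it suffices to treat $G=P$ a finite $p$-group; the case $p\nmid|G|$ is the degenerate case $P=1$, for which $\Sigma^{\infty}BG$ is $p$-adically trivial.

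\textbf{Induction for $p$-groups.} Induct on $|P|$, the case $|P|=1$ being trivial since $\Sigma^{\infty}BG=0$ for $G$ trivial. If $|P|>1$, choose a normal subgroup $N\triangleleft P$ with $P/N\cong C_p$ (for instance the kernel of a surjection $P\to P/\Phi(P)\to C_p$); then $|N|<|P|$ and there is a fibration $BN\to BP\xrightarrow{\pi}BC_p$ exhibiting $BP$ as the homotopy orbits of the induced $C_p$-action on $BN$. Since $\Sigma^{\infty}_+$ preserves colimits,
\[
\Sigma^{\infty}_+BP\simeq(\Sigma^{\infty}_+BN)_{hC_p},\qquad \Sigma^{\infty}_+BC_p\simeq\SP_{hC_p}\ \text{(trivial action)},
\]
and $\Sigma^{\infty}_+\pi$ is $(-)_{hC_p}$ applied to the augmentation $\Sigma^{\infty}_+BN\to\SP$. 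The augmentation $\Sigma^{\infty}_+BP\to\SP$ factors as $\Sigma^{\infty}_+BP\xrightarrow{\Sigma^{\infty}_+\pi}\Sigma^{\infty}_+BC_p\to\SP$, so taking fibers and using that $(-)_{hC_p}$ preserves cofiber sequences yields a cofiber sequence
\[
(\Sigma^{\infty}BN)_{hC_p}\longrightarrow\Sigma^{\infty}BP\longrightarrow\Sigma^{\infty}BC_p .
\]
By the inductive hypothesis $\Sigma^{\infty}BN\in\langle\Sigma^{\infty}BC_p\rangle^{s}$, and closure under colimits gives $(\Sigma^{\infty}BN)_{hC_p}\in\langle\Sigma^{\infty}BC_p\rangle^{s}$; the right-hand term is $\Sigma^{\infty}BC_p$ itself. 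Closure under extensions then gives $\Sigma^{\infty}BP\in\langle\Sigma^{\infty}BC_p\rangle^{s}$, completing the induction and hence the proof.

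\textbf{Expected main obstacle.} The crux is the identification $\Sigma^{\infty}_+BP\simeq(\Sigma^{\infty}_+BN)_{hC_p}$ over $\Sigma^{\infty}_+BC_p$: this is what converts the group extension $1\to N\to P\to C_p\to 1$ into a colimit that the closure $\langle\Sigma^{\infty}BC_p\rangle^{s}$ can absorb, and it is really the heart of the argument. The transfer step is standard, but the bookkeeping of the basepoint sections there (needed to descend the retract from $\Sigma^{\infty}_+$ to reduced suspension spectra) and the check that $p$-completion does not interfere with the colimits are the points that must be handled with care.
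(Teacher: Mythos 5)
Your proof is correct and follows essentially the same route as the paper: a Sylow transfer argument to reduce to $p$-groups, followed by induction on the order via an extension by $C_p$ and the identification $\Sigma^{\infty}_+BP\simeq(\Sigma^{\infty}_+BN)_{hC_p}$, with \Cref{lemma:kerlocalization} converting the statement into membership of the fiber in $\langle\Sigma^{\infty}BC_p\rangle^{s}$. The paper's version is just a terser rendering of the same two steps.
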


\begin{proof}
	The transfer realizes $\Sigma^{\infty}BG$ as a retract of $\Sigma^{\infty}G_p$, where $G_p$ is a $p$-Sylow subgroup of $G$, so we may assume that $G$ is a $p$-group. $G$ can be written as an extension $H \to G \to \ZZ/p\ZZ$. By induction on $|G|$ we may assume that the map $\Sigma^{\infty}_+BH \to \Sigma^{\infty}_+*$ is an equivalence. Taking orbits by the action of $\ZZ/p\ZZ$ coming from the extension, we learn that $\Sigma^{\infty}_+BG \to \Sigma^{\infty}_+\ZZ/p\ZZ$ is an equivalence, which composing with the isomorphism $\Sigma^{\infty}_+\ZZ/p\ZZ \to \Sigma^{\infty}_+*$ gives the inductive step.
\end{proof}

Note that kernel of the localization $\Sp_{\geq0,p} \to P_{\Sigma^{\infty}BC_p}\Sp_{\geq0,p}$ is a quotient by a tensor ideal. Thus this localization is symmetric monoidal.

A key observation is that the free $\EE_\infty$-ring on a class in degree $0$ in $P_{\Sigma^{\infty}BC_p}\Sp_{\geq0,p}$ is a polynomial ring.
\begin{lemma}\label{lemma:freestrict}
	In $P_{\Sigma^{\infty}BC_p}\Sp_{\geq0,p}$, the map $\SP\{x\} \to \Sigma^{\infty}_+\NN$ sending $x$ to $1 \in \NN$ is an equivalence.
\end{lemma}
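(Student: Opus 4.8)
The plan is to compute the free $\EE_\infty$-ring $\SP\{x\}$ on a degree-$0$ class, which is $\bigoplus_{n\geq 0} (\Sigma^\infty_+ B\Sigma_n)$ (the symmetric powers of the sphere on a point), and show that in the localization $P_{\Sigma^{\infty}BC_p}\Sp_{\geq0,p}$ each summand $\Sigma^\infty_+ B\Sigma_n$ becomes equivalent to $\Sigma^\infty_+ *$, compatibly with the ring structure. Indeed $\Sigma^\infty_+\NN = \bigoplus_{n\geq 0}\Sigma^\infty_+ *$ as a spectrum, with its $\EE_\infty$-structure making it the monoid algebra on $\NN$, and the map $\SP\{x\}\to\Sigma^\infty_+\NN$ is precisely the one that on the $n$-th summand is induced by the projection $B\Sigma_n\to *$. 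So the whole statement reduces to: the map $\Sigma^\infty_+ B\Sigma_n\to\Sigma^\infty_+*$ is an equivalence in $P_{\Sigma^{\infty}BC_p}\Sp_{\geq0,p}$ for every $n$, and this can be read off directly from \Cref{lemma:finitegroup} with $G=\Sigma_n$.

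**Carrying it out.** First I would recall the identification of the free $\EE_\infty$-ring: for a spectrum $M$, $\mathrm{Sym}(M) = \bigoplus_{n\geq0} (M^{\otimes n})_{h\Sigma_n}$, so taking $M = \SP\{x\}$ to be a free rank-one module, i.e. $\SP$ sitting in degree $0$, gives $\SP\{x\} = \bigoplus_{n\geq 0} \Sigma^\infty_+ B\Sigma_n$, with the $n$-th summand carrying the ``weight $n$'' part, i.e. $x^n$. Multiplication is induced by the maps $B\Sigma_n\times B\Sigma_m\to B\Sigma_{n+m}$. On the other side, $\Sigma^\infty_+\NN$ is the free $\EE_\infty$-ring on $\NN$ as an $\EE_\infty$-monoid in spaces (equivalently, on the free such monoid on a point), so it is $\bigoplus_{n\geq0}\Sigma^\infty_+\{n\} = \bigoplus_{n\geq 0}\SP$, with multiplication $\{n\}\times\{m\}\to\{n+m\}$. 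The canonical ring map $\SP\{x\}\to\Sigma^\infty_+\NN$ classifying $1\in\NN$ respects the weight gradings and is, summand by summand, the transfer-free map $\Sigma^\infty_+ B\Sigma_n\to\Sigma^\infty_+*$. Since this localization is symmetric monoidal (as noted just before the lemma, the kernel is a tensor ideal), it suffices to check the underlying map of spectra is an equivalence, which is the direct sum over $n$ of the maps $\Sigma^\infty_+ B\Sigma_n\to\Sigma^\infty_+ *$; these are equivalences by \Cref{lemma:finitegroup}.

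**The main obstacle.** The only genuinely substantive point is the identification of the free $\EE_\infty$-ring on a degree-$0$ class with $\bigoplus_n \Sigma^\infty_+ B\Sigma_n$ \emph{together with} the claim that the comparison map to $\Sigma^\infty_+\NN$ is the coproduct of the projections $B\Sigma_n\to *$; once that bookkeeping is in place the result is immediate from \Cref{lemma:finitegroup}. I would phrase this via the universal property: $\Sigma^\infty_+\NN$ corepresents ``$\EE_\infty$-ring maps out'' by picking a point in degree $0$ of the target, exactly as $\SP\{x\}$ does, and the map between them is the one adjoint to $* = B\Sigma_1 \hookrightarrow \Sigma^\infty_+\NN$ picking out $1$; naturality of the symmetric-power decomposition then gives the summand-wise description. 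One should also note that everything takes place after $p$-completion and in connective spectra, but both $\bigoplus_n\Sigma^\infty_+B\Sigma_n$ and $\Sigma^\infty_+\NN$ are connective and the localization is computed within $\Sp_{\geq0,p}$, so there is no issue: the map is an equivalence in $\Sp_{\geq 0,p}$ after applying $P_{\Sigma^\infty BC_p}$ because it is so summand-wise.
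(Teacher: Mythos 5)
Your proof is correct and is exactly the argument the paper has in mind: the paper's entire proof is ``apply \Cref{lemma:finitegroup} with $G=\Sigma_n$,'' and you have simply spelled out the implicit bookkeeping identifying $\SP\{x\}$ with $\bigoplus_n\Sigma^{\infty}_+B\Sigma_n$ and the comparison map with the sum of the projections $B\Sigma_n\to *$.
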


\begin{proof}
	We apply \Cref{lemma:finitegroup} to the case $G = \Sigma_n$ to obtain the lemma. 
\end{proof}

%

We are now ready to finish the proof of \Cref{thm:cnbousfieldclassmain}.
\begin{proof}[Proof of \Cref{thm:cnbousfieldclassmain}]
	Using a homology decomposition, we see that $\Sigma^{\infty}BC_p$ is in the subcategory generated under colimits by $\Sigma S/p$, so we claim that in $P_{\Sigma^{\infty}BC_p}\Sp_{\geq0,p}$, $\Sigma \SP/p=0$. To see that this claim finishes the proof, by \Cref{lemma:kerlocalization}, this implies that $\Sigma \SP/p$ is in the subcategory of $p$-complete spectra generated under colimits and extensions by $\Sigma^{\infty}BC_p$. But we may embed $\Sp_{\geq0,p}$ into $\Sp$ by taking the fiber of the rationalization map. This functor preserves colimits, and sends $\Sigma \SP/p$ and $\Sigma^{\infty}BC_p$ to itself, so we learn the desired conclusion.
	
	We turn to proving the claim. By taking the bar construction over the augmentation of the isomorphism of \Cref{lemma:freestrict} twice, we learn that there is an isomorphism of augmented $\EE_{\infty}$-algebras
	$$\SP\{x_2\}\simeq  \Sigma^{\infty}_+\CC\PP^{\infty}$$ where $x_2$ is a class in degree $2$.
	
	But $\Sigma^{\infty}\CC\PP^{\infty} = \Sigma^{\infty}B\QQ_p/\ZZ_p $ since we are working $p$-adically, and the latter is $0$ by \Cref{lemma:finitegroup} since $\QQ_p/\ZZ_p$ is a filtered colimit of finite groups. Thus the augmentation ideal of $\SP\{x_2\}$ is zero, so in particular $\Sigma^2\SP = 0$, since this is a retract of the augmentation ideal. Thus the category is $1$-truncated, so we have $0= \Sigma^{\infty}BC_p = \tau_{\leq1}\Sigma^{\infty}BC_p = \tau_{\leq 1}\Sigma \SP/p =\Sigma \SP/p$.

\end{proof}

We now give an alternative proof of \Cref{thm:cnbousfieldclassmain} using the symmetric power filtration on $\ZZ$. We thank Gijs Heuts for discussions that led to this alternative proof.

\begin{proof}[Second proof of \Cref{thm:cnbousfieldclassmain}]\label{proof:number2}
	By \cite[Theorem A]{mitchell1984symmetric} and \cite[Proposition 5.15]{mitchell1984symmetric}, if we consider the filtration $\mathrm{Sp}^{p^n}\SP_{(p)}$ of $\ZZ_{(p)}$, then the associated graded $\mathrm{Sp}^{p^n}\SP_{(p)}/\mathrm{Sp}^{p^{n-1}}\SP_{(p)}$ for $n\geq 1$ is up to a suspension a summand of $\Sigma^{\infty}(BC_p)^n$. These associated graded pieces are connected and their homology groups are $p$-power torsion and finitely generated, so by \Cref{lemma:cnsuspension} and \Cref{lemma:kerlocalization}, it follows that they are killed by $P_{\Sigma^{\infty}BC_p}$. It then follows that $\SP_{(p)} \to \ZZ_{(p)}$ is an equivalence after applying $P_{\Sigma^{\infty}BC_p}$, and so $\SP/p \to \ZZ/p$ is too. But $P_{\Sigma^{\infty}BC_p}(\Sigma \ZZ/p)=0$, since $\Sigma^{\infty}BC_p\otimes \ZZ$ has $\Sigma \ZZ/p$ as a summand. Thus $P_{\Sigma^{\infty}BC_p}(\Sigma \SP/p) = P_{\Sigma^{\infty}BC_p}(\Sigma \ZZ/p) = 0$, which implies the nontrivial inclusion in the theorem using \Cref{lemma:kerlocalization}.
\end{proof}

\section{Unstable Bousfield classes}

Recall that the \textit{unstable Bousfield class} of a space $X$, denoted $\langle X\rangle^{\un}$, is the kernel of the localization $P_X$.

We recall the following result of Bousfield relating unstable and stable Bousfield classes:

\begin{proposition}[{\cite[Proposition 3.1]{bousfield1996unstable}}]\label{prop:cnunstablebousclasss}
	Let $X$ be a pointed space and $W$ be a connective spectrum. Then $\langle X\rangle^{\un}\supset \langle \Omega^{\infty}W\rangle^{\un}$ iff $\langle\Sigma^{\infty} X\rangle^{s} \supset \langle W\rangle^{s}$. In particular, $\langle X\rangle^{\un}\supset \langle \Omega^{\infty}\Sigma^{\infty}X\rangle^{\un}$ and $\langle \Sigma^{\infty}\Omega^{\infty}W\rangle^{s} \supset \langle W\rangle^{s}$.
\end{proposition}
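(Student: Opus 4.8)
Here is the plan.

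\emph{Reduction.} I would first observe that for a localization $L_f$ the $L_f$-local equivalences form the strongly saturated class generated by $f$, so for a nullification $P_Z$ and any $Z'$ with $P_ZZ'\simeq *$ the map $Z'\to *$ is a $P_Z$-equivalence, whence every $P_{Z'}$-local equivalence is a $P_Z$-local equivalence and $\langle Z\rangle^{\un}\supseteq\langle Z'\rangle^{\un}$; conversely $\langle Z\rangle^{\un}\supseteq\langle Z'\rangle^{\un}$ gives $Z'\in\langle Z'\rangle^{\un}\subseteq\langle Z\rangle^{\un}$ since $P_{Z'}Z'\simeq *$. Thus $\langle X\rangle^{\un}\supseteq\langle \Omega^\infty W\rangle^{\un}\iff P_X\Omega^\infty W\simeq *$, and by \Cref{lemma:kerlocalization} (which identifies $\langle V\rangle^s$ with $\ker P_V$) the inclusion $\langle\Sigma^\infty X\rangle^s\supseteq\langle W\rangle^s$ is equivalent to $W\in\langle\Sigma^\infty X\rangle^s$, i.e.\ to $P_{\Sigma^\infty X}W\simeq0$. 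So the whole statement becomes: $P_X\Omega^\infty W\simeq *\iff P_{\Sigma^\infty X}W\simeq 0$. One may assume $X$ connected (otherwise $P_X$ is degenerate), so $P_X$ preserves $\pi_0$ and the left-hand side forces $W$ to be $0$-connected; I will also use freely that $\langle X\rangle^{\un}$ is closed under pointed homotopy colimits, finite products, and cofibre-sequence extensions, and that $\langle\Sigma^\infty X\rangle^s$ is a $\otimes$-ideal of $\Sp_{\geq0}$ closed under colimits and extensions.

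\emph{Two lemmas.} (a) If $Y$ is connected and $X$-acyclic then $\Omega^\infty\Sigma^\infty Y$ is $X$-acyclic: by May's approximation theorem $\Omega^\infty\Sigma^\infty Y\simeq C_\infty(Y)=\operatorname*{colim}_nC_n(Y)$ with $C_0(Y)=*$ and cofibre sequences $C_{n-1}(Y)\to C_n(Y)\to(Y^{\wedge n})_{h\Sigma_n}$; the subquotients lie in $\langle X\rangle^{\un}$ (smash powers of an $X$-acyclic space are $X$-acyclic, homotopy orbits are colimits), so by extension-closure each $C_n(Y)$ does, hence so does the colimit. Applying the same filtration to $\Sigma^\infty C_\bullet(Y)$ and using the $\otimes$-ideal property gives the stable analogue: $\Sigma^\infty Y\in\langle\Sigma^\infty X\rangle^s\Rightarrow\Sigma^\infty\Omega^\infty\Sigma^\infty Y\in\langle\Sigma^\infty X\rangle^s$ for $Y$ connected. (b) A map $p\colon E\to B$ of spaces with $X$-acyclic fibres is a $P_X$-equivalence: writing $E\simeq\operatorname*{colim}_{b\in B}E_b$ (unstraightening of $p$) and applying $\Map(-,T)$ for $X$-local $T$, the map $p^\ast\colon\Map(B,T)\to\Map(E,T)$ becomes $\lim_{b\in B}$ of the maps $\Map(\ast,T)\to\Map(E_b,T)$, each an equivalence because $E_b\to\ast$ is a $P_X$-equivalence and $T$ is local; so $p^\ast$ is an equivalence, and likewise on pointed mapping spaces.

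\emph{The two directions.} For $P_{\Sigma^\infty X}W\simeq0\Rightarrow P_X\Omega^\infty W\simeq *$, run the small object argument to build $W=W_0\to\cdots\to W_\lambda=P_{\Sigma^\infty X}W=0$ with $\operatorname{fib}(W_\alpha\to W_{\alpha+1})\simeq\bigoplus_i\Sigma^{n_i}\Sigma^\infty X$ ($n_i\geq0$) at successor stages and filtered colimits at limits; applying $\Omega^\infty$, the fibre of each $\Omega^\infty W_\alpha\to\Omega^\infty W_{\alpha+1}$ is $\Omega^\infty(\bigoplus_i\Sigma^{n_i}\Sigma^\infty X)$, a filtered colimit of finite products of the $X$-acyclic spaces $\Omega^\infty\Sigma^\infty(\Sigma^{n_i}X)$ (Lemma (a)), hence $X$-acyclic, so each map is a $P_X$-equivalence by Lemma (b); since $\Omega^\infty$ preserves the limit-stage colimits, the transfinite composite $\Omega^\infty W\to\Omega^\infty W_\lambda=\ast$ is a $P_X$-equivalence. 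For the converse, since $\Sigma^\infty$ is a left adjoint it sends $P_X$-equivalences to $P_{\Sigma^\infty X}$-equivalences, so $\Sigma^\infty\Omega^\infty W\in\langle\Sigma^\infty X\rangle^s$; as $\Omega^\infty\colon\Sp_{\geq0}\to\Spc_\ast$ is conservative and preserves sifted colimits it is monadic for $\mathrm{T}=\Omega^\infty\Sigma^\infty$, so $W$ is the realization of the bar resolution $[n]\mapsto\Sigma^\infty\mathrm{T}^n\Omega^\infty W$, whose $n$-th term is $\Sigma^\infty\Omega^\infty\Sigma^\infty Y_{n-1}$ with $\Sigma^\infty Y_{n-1}$ the $(n-1)$-st term and $Y_{n-1}$ connected; starting from $\Sigma^\infty\Omega^\infty W\in\langle\Sigma^\infty X\rangle^s$, induction via the stable half of Lemma (a) puts every term, and hence $W$, in $\langle\Sigma^\infty X\rangle^s$. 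The ``in particular'' clauses are the cases $W=\Sigma^\infty X$ and $X=\Omega^\infty W$.

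\emph{Main obstacle.} What keeps this from being purely formal is that $X$-acyclic spaces are \emph{not} closed under fibrations — for instance Eilenberg--Mac\,Lane pieces of $\Omega^\infty\Sigma^\infty X$ are $X$-null, not $X$-acyclic — so one cannot argue that $\{W:\Omega^\infty W\text{ is }X\text{-acyclic}\}$ is closed under extensions. Lemma (b) is the device that sidesteps this: up the nullification tower one propagates the property of \emph{being a $P_X$-equivalence}, never $X$-acyclicity. The other genuine input is Lemma (a), where May's configuration-space model of $\Omega^\infty\Sigma^\infty$ is used; this is really the known principle that $\Omega^\infty\Sigma^\infty$ preserves $P_X$-acyclic spaces, i.e.\ the ``in particular'' statement $\langle X\rangle^{\un}\supseteq\langle\Omega^\infty\Sigma^\infty X\rangle^{\un}$, and is the step I expect to require the most care.
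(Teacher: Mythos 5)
This proposition is not proved in the paper at all: it is imported verbatim from Bousfield (\cite[Proposition 3.1]{bousfield1996unstable}), so there is no in-paper argument to compare against. Your reconstruction is, as far as I can check, correct, and it follows the same circle of ideas as the standard (Bousfield/Farjoun) proof: the two genuinely non-formal inputs are exactly the ones you isolate, namely that $\Omega^{\infty}\Sigma^{\infty}$ preserves $X$-acyclicity (via the May/Snaith filtration, using that acyclics are closed under smash with anything, pointed colimits, and cofibration extensions) and the fiberwise nullification theorem, which you correctly identify as the device that circumvents the failure of acyclics to be closed under fibrations. The reduction via \Cref{lemma:kerlocalization} to the statement $P_X\Omega^{\infty}W\simeq *\iff P_{\Sigma^{\infty}X}W\simeq 0$, the transfinite attaching argument in one direction, and the monadic bar resolution for $\Sigma^{\infty}\dashv\Omega^{\infty}$ in the other are all sound, as is the observation that connectedness of $X$ forces everything in sight to be $0$-connected so that the fiber sequences behave.

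Two small points deserve tightening. First, ``$\Sigma^{\infty}$ is a left adjoint, hence sends $P_X$-equivalences to $P_{\Sigma^{\infty}X}$-equivalences'' is not a formal consequence of adjointness alone; what you need (and what is true) is that $\Omega^{\infty}$ carries $\Sigma^{\infty}X$-null spectra to $X$-null spaces, which follows from $\Map(\Sigma^{\infty}X,T)\simeq\Map_*(X,\Omega^{\infty}T)$ together with the fact that the null objects for $P_{\Sigma^{\infty}X}$ on connective spectra are detected by $\Map(\Sigma^{\infty}X,-)\simeq *$. Second, the closure of $X$-acyclic spaces under ``cofibre-sequence extensions,'' which you use for the May filtration and for finite products via $A\vee B\to A\times B\to A\wedge B$, is not closure under colimits; it rests on the separate fact that for a cofibration $A\to B$ with $A$ acyclic the map $B\to B/A$ is a $P_X$-equivalence (immediate from $P_X(\mathrm{colim}\,D)\simeq P_X(\mathrm{colim}\,P_XD)$). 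Both facts are standard, but since they carry the real content of the ``in particular'' clause $\langle X\rangle^{\un}\supset\langle\Omega^{\infty}\Sigma^{\infty}X\rangle^{\un}$, they should be stated explicitly rather than absorbed into ``used freely.''
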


\begin{theorem}\label{cor:unstablebousclass}
	There is an equality of unstable Bousfield classes $\langle Q(S^1/p)\rangle^{\un} = \langle BC_p\rangle^{\un}$. Moreover this unstable Bousfield class contains $\Omega^{\infty}X$ for any connected spectrum $X$ with $X[p^{-1}]=0$.
\end{theorem}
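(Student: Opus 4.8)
The plan is to deduce this from \Cref{thm:cnbousfieldclassmain} together with Bousfield's \Cref{prop:cnunstablebousclasss}, which converts equalities of stable connective Bousfield classes into equalities of unstable ones for infinite loop spaces. First I would observe that \Cref{prop:cnunstablebousclasss} applied to $X = BC_p$ and $W = \Sigma^{\infty}BC_p$ gives $\langle BC_p\rangle^{\un} \supset \langle \Omega^{\infty}\Sigma^{\infty}BC_p\rangle^{\un}$, and applied to $X = S^1/p = \Sigma S^1_p$... wait, rather applied to $X = S^1/p$ with $W = \Sigma^{\infty}(S^1/p) = \Sigma \SP/p$, gives $\langle S^1/p\rangle^{\un} \supset \langle \Omega^{\infty}\Sigma \SP/p\rangle^{\un} = \langle Q(S^1/p)\rangle^{\un}$. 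So I need to upgrade these containments: the key input is that, by \Cref{thm:cnbousfieldclassmain}, $\langle \Sigma^{\infty}BC_p\rangle^{s} = \langle \Sigma\SP/p\rangle^{s}$, and hence by the iff in \Cref{prop:cnunstablebousclasss} (using the connectivity of $BC_p$ and $S^1/p$) one gets $\langle BC_p\rangle^{\un} \supset \langle Q(S^1/p)\rangle^{\un}$ from $\langle \Sigma^{\infty}BC_p\rangle^s \supset \langle \Sigma^{\infty}BC_p\rangle^s = \langle \Sigma \SP/p\rangle^s$ — more precisely, $\langle BC_p\rangle^{\un} \supset \langle \Omega^{\infty}W\rangle^{\un}$ for any connective $W$ with $\langle \Sigma^\infty BC_p\rangle^s \supset \langle W\rangle^s$, and in particular for $W = \Sigma\SP/p$, giving one inclusion and the ``moreover'' clause simultaneously.

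Next, for the reverse inclusion $\langle Q(S^1/p)\rangle^{\un} \supset \langle BC_p\rangle^{\un}$, I would argue symmetrically: since $\Sigma^{\infty}(S^1/p) = \Sigma\SP/p$ and $\langle \Sigma\SP/p\rangle^s = \langle \Sigma^{\infty}BC_p\rangle^s \supset \langle \Sigma^{\infty}BC_p\rangle^s$, \Cref{prop:cnunstablebousclasss} gives $\langle S^1/p\rangle^{\un} \supset \langle \Omega^{\infty}\Sigma^{\infty}BC_p\rangle^{\un} \supset \langle BC_p\rangle^{\un}$, where the last containment is the final clause of \Cref{prop:cnunstablebousclasss}. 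But I actually want $Q(S^1/p)$, not $S^1/p$, on the left. Here I would use that $Q(S^1/p) = \Omega^{\infty}\Sigma^{\infty}(S^1/p)$ and invoke the first ``in particular'' clause of \Cref{prop:cnunstablebousclasss} in the other direction, namely $\langle S^1/p\rangle^{\un} \supset \langle Q(S^1/p)\rangle^{\un}$ — no wait, that goes the wrong way. Instead: apply the iff with $X = Q(S^1/p)$? That is not allowed since $Q(S^1/p)$ need not be a suspension with nice homology... Let me reconsider: the cleanest route is that $\langle Q(S^1/p)\rangle^{\un} = \langle \Omega^\infty \Sigma\SP/p\rangle^{\un}$, and by the iff clause $\langle X\rangle^{\un}\supset \langle\Omega^\infty W\rangle^{\un}$ iff $\langle \Sigma^\infty X\rangle^s \supset \langle W\rangle^s$, taking $X = BC_p$ and $W = \Sigma\SP/p$ gives the first inclusion, while taking $X = S^1/p$ and $W = \Sigma^\infty BC_p$ gives $\langle S^1/p\rangle^{\un}\supset\langle Q BC_p\rangle^{\un}$; then since $\langle\Sigma^\infty S^1/p\rangle^s = \langle \Sigma^\infty BC_p\rangle^s$, also $\langle S^1/p\rangle^{\un}\supset\langle Q(S^1/p)\rangle^{\un}$ is not what closes it either. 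The honest fix: Bousfield's nullity class $\langle -\rangle^{\un}$ is a \emph{stable} invariant of an infinite loop space in the sense that $\langle \Omega^\infty W\rangle^{\un}$ depends only on $\langle W\rangle^s$ — this is exactly the content of the iff, since $\langle \Omega^\infty W\rangle^{\un}\supset\langle\Omega^\infty W'\rangle^{\un}$ whenever $\langle W\rangle^s \supset \langle W'\rangle^s$ by applying the iff with $X = \Omega^\infty W$ and using $\langle \Sigma^\infty\Omega^\infty W\rangle^s\supset\langle W\rangle^s$. So from $\langle\Sigma^\infty BC_p\rangle^s = \langle\Sigma\SP/p\rangle^s$ we directly get $\langle \Omega^\infty\Sigma^\infty BC_p\rangle^{\un} = \langle\Omega^\infty\Sigma\SP/p\rangle^{\un} = \langle Q(S^1/p)\rangle^{\un}$.

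It remains to identify $\langle \Omega^{\infty}\Sigma^{\infty}BC_p\rangle^{\un}$ with $\langle BC_p\rangle^{\un}$. One inclusion $\langle BC_p\rangle^{\un}\supset\langle\Omega^\infty\Sigma^\infty BC_p\rangle^{\un}$ is the ``in particular'' clause of \Cref{prop:cnunstablebousclasss}. The reverse, $\langle \Omega^\infty\Sigma^\infty BC_p\rangle^{\un}\supset\langle BC_p\rangle^{\un}$, amounts to: any space that is $P_{BC_p}$-acyclic is also $P_{\Omega^\infty\Sigma^\infty BC_p}$-acyclic. This is where I expect the main subtlety, and it should follow from the standard fact (e.g. Bousfield, or the ``well known principle of unstable localizations'' referenced in the introduction) that $P_{BC_p}$ and $P_{\Omega^\infty\Sigma^\infty BC_p}$ are the same nullification, because $\Omega^\infty\Sigma^\infty BC_p$ is built from $BC_p$ by finite homotopy (co)limit and retract operations preserved under $P_{BC_p}$-nullification — concretely, $P_{BC_p}(\Omega^\infty\Sigma^\infty BC_p) = *$ since $\Omega^\infty\Sigma^\infty$ of a $P_{BC_p}$-equivalence is a $P_{BC_p}$-equivalence (the nullification is closed under the relevant constructions), which immediately gives $\langle BC_p\rangle^{\un} = \langle \Omega^\infty\Sigma^\infty BC_p\rangle^{\un}$. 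Finally, the ``moreover'' statement follows because for any connected spectrum $X$ with $X[p^{-1}] = 0$ we have $X \in \langle \Sigma\SP/p\rangle^s$ — its homotopy is $p$-power torsion and it is built under colimits and extensions from $\Sigma\SP/p$ — and hence $\langle \Sigma^\infty BC_p\rangle^s = \langle\Sigma\SP/p\rangle^s \supset \langle X\rangle^s$, so by the iff clause with source $BC_p$, $\langle BC_p\rangle^{\un} = \langle Q(S^1/p)\rangle^{\un} \supset \langle \Omega^\infty X\rangle^{\un}$, i.e. $\Omega^\infty X$ lies in this unstable Bousfield class. The main obstacle is pinning down precisely the statement that nullification at a space and at its stabilization agree; I would cite Bousfield's work on unstable localizations or Dror Farjoun for this, rather than reprove it.
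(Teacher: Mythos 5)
Your first inclusion ($\langle BC_p\rangle^{\un}\supset\langle Q(S^1/p)\rangle^{\un}$) and the ``moreover'' clause are correct and match the paper: both follow from \Cref{thm:cnbousfieldclassmain} plus the iff in \Cref{prop:cnunstablebousclasss} with $X=BC_p$ and $W=\Sigma\SP/p$ (resp.\ $W=X$). The reduction of the reverse inclusion to $\langle \Omega^{\infty}\Sigma^{\infty}BC_p\rangle^{\un}\supset\langle BC_p\rangle^{\un}$ is also sound. But the justification you give for that last containment is backwards, and this is a genuine gap. You argue that $P_{BC_p}(\Omega^{\infty}\Sigma^{\infty}BC_p)=*$; that statement says $\Omega^{\infty}\Sigma^{\infty}BC_p$ lies in $\langle BC_p\rangle^{\un}$, which yields $\langle BC_p\rangle^{\un}\supset\langle \Omega^{\infty}\Sigma^{\infty}BC_p\rangle^{\un}$ --- i.e.\ the containment you already have from the ``in particular'' clause of \Cref{prop:cnunstablebousclasss}. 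What you actually need is the opposite statement $P_{\Omega^{\infty}\Sigma^{\infty}BC_p}(BC_p)=*$, i.e.\ that $BC_p$ is killed by nullifying $\Omega^{\infty}\Sigma^{\infty}BC_p$, and nothing in your argument addresses this. It is not a formal consequence of nullification being ``the same'' in both directions; the two containments are independent.

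The gap is fixable. One repair along your lines: $\pi_1\Omega^{\infty}\Sigma^{\infty}BC_p=\pi_1^{s}(BC_p)=\ZZ/p$ by stable Hurewicz, and the truncation $\Omega^{\infty}\Sigma^{\infty}BC_p\to \tau_{\leq 1}\Omega^{\infty}\Sigma^{\infty}BC_p\simeq K(\ZZ/p,1)$ splits the unit, so $BC_p$ is a retract of $\Omega^{\infty}\Sigma^{\infty}BC_p$ and hence $P_{\Omega^{\infty}\Sigma^{\infty}BC_p}(BC_p)$ is a retract of a point. The paper instead sidesteps the issue entirely: it observes that $BC_p=K(\ZZ/p,1)$ is itself an infinite loop space, namely $\Omega^{\infty}W$ for $W=\Sigma\FF_p$ (the shifted Eilenberg--MacLane spectrum), and applies the iff of \Cref{prop:cnunstablebousclasss} with $X=Q(S^1/p)$ and this $W$, using $\langle\Sigma^{\infty}Q(S^1/p)\rangle^{s}\supset\langle\Sigma\SP/p\rangle^{s}\supset\langle\Sigma\FF_p\rangle^{s}$. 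Either route works, but as written your proposal proves only one of the two containments it claims.
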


\begin{proof}
	Taking $X = BC_p$ and $W = S^1/p$, \Cref{thm:cnbousfieldclassmain} along with \Cref{prop:cnunstablebousclasss} implies that $\langle BC_p\rangle^{\un} \supset \langle Q(S^1/p)\rangle^{\un}$. For the other inclusion, we can take $X = Q(S^1/p)$ and $W = \Sigma \FF_p$, and apply \Cref{prop:cnunstablebousclasss} again.
	
	To show that $\Omega^{\infty}X$ is contained in $\langle \Omega^{\infty}X\rangle$ for any connected spectrum with $X[p^{-1}]$, we note that such $X$ are contained in $\langle \Sigma \SP/p\rangle^{s}$, and apply \Cref{prop:cnunstablebousclasss}.
%
\end{proof}

The following localizations will be useful in studying the Sullivan conjecture.
\begin{definition}
	We let $P_{\vee_{p\in \PP}BC_p}$ denote the presentable localization of spaces obtained from inverting the maps $BC_p \to *$, and let $L$ denote the presentable localization obtained by inverting the maps $BC_p \to *$ and $\FF_p$-homology equivalences.
\end{definition}

Thess localizations are relevant because of the following lemma:

\begin{lemma}\label{lemma:pcompletefinitelocal}
	If $X$ is a finite dimensional $\FF_p$-complete nilpotent space, then $X$ is $L$-local. If $X$ is a the homotopy type of a finite dimensional CW-complex, then $X$ is $P_{\vee_{p\in \PP}BC_p}$-local.
\end{lemma}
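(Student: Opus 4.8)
The statement has two parts; the second is essentially the $\FF_p$-completed version of the first, so I would set up a framework that handles both. The key structural fact I want to use is that nilpotent spaces are built from Eilenberg--MacLane spaces by a finite tower of principal fibrations, and that both $P_{\vee_p BC_p}$-localization and $L$-localization behave well with respect to fibrations whose fiber and base are local (a local class is closed under the relevant limits). So the plan is: (1) reduce to the case of $K(A,n)$'s; (2) handle those directly; (3) assemble via the Postnikov/principal-refinement tower using finite dimensionality to control the tower.

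\textbf{Step 1: reduce to Eilenberg--MacLane spaces.}
Since $X$ is nilpotent, it admits a finite principal refinement of its Postnikov tower: a finite sequence $X = X_m \to X_{m-1} \to \cdots \to X_0 = *$ where each $X_i \to X_{i-1}$ is a principal fibration with fiber $K(A_i, n_i)$ for abelian groups $A_i$ and $n_i \geq 1$, classified by a map $X_{i-1} \to K(A_i, n_i+1)$. (In the non-nilpotent second case, $X$ finite dimensional, I would instead use that $X$ is built from a finite number of cells, or pass to a suitable cover; I will return to this.) Being $L$-local (resp.\ $P_{\vee_p BC_p}$-local) is equivalent to $\Map(BC_p, X) \to X$ being an equivalence (plus, for $L$, being $\FF_p$-local); since $\Map(BC_p, -)$ commutes with the homotopy pullbacks defining the principal fibrations, if the fiber $K(A_i, n_i)$ and the classifying space $K(A_i, n_i+1)$ are local for all $i$, then by downward induction each $X_i$ is local, hence $X = X_m$ is. The finiteness of the tower (which is where finite dimensionality enters: a finite dimensional nilpotent space has a finite Postnikov tower only after $\FF_p$-completion or rationally, so for the $L$-statement one should first observe $X \simeq X^{\wedge}_p$ has homotopy groups that are finitely generated $\ZZ_p$-modules concentrated in a range $\leq \dim X$, while for the second statement one uses the cellular structure) is essential to make the induction terminate.

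\textbf{Step 2: Eilenberg--MacLane spaces.}
So I am reduced to showing $K(A,n)$ is local for the relevant $A$ and $n \geq 1$. For the $P_{\vee_p BC_p}$-statement, $A$ ranges over finitely generated abelian groups (homotopy groups of a finite complex are finitely generated), and I want $\Map(BC_p, K(A,n)) \to K(A,n)$ to be an equivalence for all $p$; on homotopy this is the assertion $\widetilde{H}^{*}(BC_p; A) = 0$ for $* > 0$ in the relevant degrees together with $\pi_0$-bijectivity, i.e.\ reduced cohomology of $BC_p$ with finitely generated coefficients vanishes after suitable completion --- more precisely, $\Map(BC_p, K(\ZZ, n)) \simeq K(\ZZ, n)$ because $\widetilde{H}^*(BC_p; \ZZ)$ is all torsion and $\Map$ is computed by a spectral sequence whose input I can control, and $K(\ZZ/\ell, n)$ is handled since $\widetilde{H}^*(BC_p; \ZZ/\ell)$ is concentrated appropriately (vanishing for $\ell \neq p$, and for $\ell = p$ one invokes exactly Miller's theorem, \Cref{thm:miller}, which says $K(A,n)$ with $A$ finite is $\FF_p$-locally $BC_p$-null). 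For the $L$-statement, $A$ is a finitely generated $\ZZ_p$-module, and locality means $\FF_p$-local (automatic, as $K(A,n)$ is already $p$-complete) plus $BC_p$-null, which is again precisely the content of Miller's theorem applied to these $K(A,n)$'s.

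\textbf{Main obstacle.}
I expect the main difficulty is \emph{not} the Eilenberg--MacLane case (that is Miller's theorem, which we are handed) but rather the bookkeeping needed to run Step 1 cleanly --- in particular verifying that the class of local spaces is closed under the pullbacks appearing in a principal refinement, which requires knowing $P_X$ (for $X = \vee_p BC_p$, and for the $\FF_p$-localized version giving $L$) preserves these fiber sequences, equivalently that $\Map(\vee_p BC_p, -)$ and $\FF_p$-localization interact correctly with the fibration; and ensuring the tower is genuinely finite in each case. The cleanest route is to note that ``$W$-null'' classes are closed under all limits (being a limit-closed, accessible localization), so a fibration with $W$-null base and fiber has $W$-null total space with no further work; the only real content is then the finiteness of the principal refinement, which for finite dimensional nilpotent $\FF_p$-complete $X$ follows because such an $X$ has homotopy groups that vanish above its dimension, and for finite dimensional $X$ in the second statement follows from the cellular filtration (reducing, cell by cell via cofiber sequences $S^{k} \to X_{(k)} \to X_{(k+1)}$, to spheres, which are $P_{\vee_p BC_p}$-local by Miller plus the integral vanishing of $\widetilde H^*(BC_p;\ZZ)$ after the relevant completion --- here one must be slightly careful, as an arbitrary finite complex is \emph{not} $BC_p$-null, so the correct statement and the one to prove is the one with the $\FF_p$-completion built in, matching the hypothesis).
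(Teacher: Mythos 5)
There is a genuine gap, and in fact the intended proof is a one-line citation: the hypotheses of \Cref{thm:miller} as stated in the paper are exactly ``finite dimensional, nilpotent, $\FF_p$-local,'' which your space $X$ satisfies on the nose (being $\FF_p$-complete it is $\FF_p$-local, and that same hypothesis gives locality for the $\FF_p$-homology equivalences inverted by $L$). No Postnikov decomposition is needed or wanted. The second statement is likewise a direct citation of Miller's Theorem A, which says that an arbitrary finite dimensional CW complex is $BG$-null for $G$ locally finite --- note that $P_{\vee_p BC_p}$ inverts only the maps $BC_p\to *$ and no homology equivalences, so ``$P_{\vee_p BC_p}$-local'' means exactly ``$BC_p$-null for all $p$''; your parenthetical claim that an arbitrary finite complex is \emph{not} $BC_p$-null and that one must build in an $\FF_p$-completion contradicts the very statement being proved.

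More importantly, the reduction you propose cannot be repaired. First, a finite dimensional nilpotent space does not have a finite Postnikov tower, even after $p$-completion: $(S^2)^{\wedge}_p$ has infinitely many nonzero homotopy groups, and homotopy groups are not concentrated in degrees $\leq \dim X$. Second, and fatally, the Eilenberg--MacLane pieces that arise are not $BC_p$-null, so the induction never starts: $\widetilde H^{2k}(BC_p;\ZZ)\cong \ZZ/p\neq 0$ (``all torsion'' is not ``zero''), so $\Map_*(BC_p,K(\ZZ,n))$ is not contractible for $n\geq 2$ --- concretely, $BC_p\to BU(1)=K(\ZZ,2)$ classifying a nontrivial character is essential --- and the same computation shows $K(\ZZ_p,n)$ is not $BC_p$-null for $n\geq 2$, while $\ZZ_p$'s genuinely occur as homotopy groups of $p$-complete finite complexes. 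This is precisely why Miller's theorem is hard: the nullity of a finite dimensional space is an assertion about the whole infinite Postnikov tower that fails stage by stage, and Miller proves it by entirely different (unstable Adams spectral sequence) methods. Finally, the cell-by-cell variant for the second statement also fails on general grounds: $W$-null spaces are closed under limits, not colimits, so nullity does not propagate along cofiber sequences or cell attachments.
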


\begin{proof}
	For the first statement, \Cref{thm:miller} implies that it is local with respect to $BC_p \to *$. The second statement follows from \cite[Theorem A]{miller1984sullivan}.
\end{proof}

\begin{lemma}\label{lemma:padictoruslocal}
	For any $p$-complete torsion free abelian group $M$, $BM$ is $L$-local.
\end{lemma}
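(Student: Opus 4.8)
The plan is to check the two local conditions defining $L$ directly on $BM = K(M,1)$: that $BM$ is $\FF_p$-local, and that $BM$ is $BC_q$-null for every prime $q$. Crucially, one should \emph{not} try to use \Cref{cor:unstablebousclass} here; that result places $\Omega^\infty X$ inside the unstable Bousfield class $\langle BC_p\rangle^{\un}$ for connected $X$ with $X[p^{-1}]=0$, which is the wrong direction — indeed $HM[1][p^{-1}]$ need not vanish (e.g. $M=\ZZ_p$), and what we want is instead that $BM$ is \emph{orthogonal} to this class.

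For $\FF_p$-locality: since $M$ is abelian, $BM=K(M,1)$ is nilpotent, so its $\FF_p$-localization coincides with its Bousfield--Kan $p$-completion, and for Eilenberg--MacLane spaces the latter is computed by the standard formula in terms of $\mathrm{Ext}(\ZZ/p^\infty,M)$ in degree $1$ and $\mathrm{Hom}(\ZZ/p^\infty,M)$ in degree $2$. Torsion-freeness of $M$ gives $\mathrm{Hom}(\ZZ/p^\infty,M)=\lim_n M[p^n]=0$ (and makes the relevant $\lim^1$ term vanish, so classical and derived $p$-completeness agree), and $p$-completeness of $M$ gives $\mathrm{Ext}(\ZZ/p^\infty,M)=M$. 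Hence $BM$ is already $\FF_p$-complete. (Alternatively, one can write $M=\lim_n M/p^nM$, note $BM=\operatorname{holim}_n K(M/p^nM,1)$ since the tower consists of surjections, observe each $K(M/p^nM,1)$ is $\FF_p$-local because $M/p^nM$ is bounded $p$-torsion, and use that $\FF_p$-local spaces are closed under limits.)

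For $BC_q$-nullity, it suffices to show $\mathrm{Map}_*(BC_q,BM)\simeq *$ for every prime $q$. The homotopy groups of this pointed mapping space are the reduced cohomology groups $\widetilde H^{1-j}(BC_q;M)$ for $j\geq 0$, so everything in degrees $j\geq 2$ vanishes automatically; we have $\widetilde H^0(BC_q;M)=0$ by connectivity, and by the universal coefficient theorem $\widetilde H^1(BC_q;M)=\mathrm{Hom}(H_1BC_q,M)=\mathrm{Hom}(C_q,M)=0$ since $M$ is torsion free. Thus all homotopy groups of $\mathrm{Map}_*(BC_q,BM)$ vanish, so it is contractible and $BM$ is $BC_q$-null.

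Finally, $L$ is by definition the presentable localization inverting the $\FF_p$-homology equivalences together with the maps $BC_q\to *$; since $BM$ is local with respect to each of these classes of maps, it is $L$-local. The only point that requires a moment of care is that $M$ may be an arbitrary, possibly very large, torsion-free $p$-complete abelian group rather than a finite free $\ZZ_p$-module; but both inputs — the $p$-completion formula for $K(M,1)$ and the universal-coefficient computation of $H^{\leq 1}(BC_q;M)$ — are valid with arbitrary abelian coefficients, so no genuine obstacle arises, and both hypotheses (torsion free and $p$-complete) are used exactly where expected.
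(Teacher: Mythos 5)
Your proof is correct and follows essentially the same two-step strategy as the paper: establish $\FF_p$-completeness of $BM$ from the $p$-completeness of $M$ via a limit/completion argument (the paper phrases this as $BM$ being generated under limits from $B\FF_p$ and $B^2\FF_p$, which your tower $\operatorname{holim}_n K(M/p^nM,1)$ and the $\mathrm{Ext}(\ZZ/p^\infty,M)$ computation make precise), and deduce $BC_p$-nullity from torsion-freeness of $M$ exactly as the paper does. Your version simply supplies the details the paper leaves implicit.
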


\begin{proof}
	$BM$ is $\FF_p$-complete since it is generated under limits by $B\FF_p$ and $B^2\FF_p$, which are $\FF_p$-complete. It is local with respect to $BC_p \to *$ because it is torsion free.
\end{proof}

\begin{theorem}\label{thm:main}
	Let $X$ be an infinite loop space. Then the $L$-localization of $X$ is given by the map $X \to \coprod_{\pi_0X}B((\pi_1X/\mathrm{tors})^{\wedge}_p)$ which is a bijection on components and on each component induces the map $\pi_1X \to (\pi_1X/\mathrm{tors})^{\wedge}_p$ on fundamental groups. In particular, this induces an equivalence on mapping spaces into any finite dimensional $p$-complete nilpotent space.
\end{theorem}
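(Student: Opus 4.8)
Set $T := \coprod_{\pi_0X}B((\pi_1X/\mathrm{tors})^{\wedge}_p)$. Since $X$ is an infinite loop space the group $M := (\pi_1X/\mathrm{tors})^{\wedge}_p$ is independent of the component, so $T \simeq \pi_0X \times BM$; by \Cref{lemma:padictoruslocal} $BM$ is $L$-local, and since $BC_p$ is connected and $L$-equivalences are bijective on $\pi_0$ one checks that the coproduct $T$ is again $L$-local. Thus it suffices to show that $g$ is an $L$-equivalence, the final sentence then being immediate from \Cref{lemma:pcompletefinitelocal}. Writing $X = \Omega^{\infty}E$ with $E$ connective, and using $\Omega^{\infty}\tau_{\leq 1}E \simeq \pi_0E\times B\pi_1E$, I would factor $g$ as
\[
\Omega^{\infty}E \longrightarrow \Omega^{\infty}\tau_{\leq 1}E \longrightarrow T,
\]
the first map induced by $E\to\tau_{\leq1}E$ and the second being the identity on $\pi_0$ and on each component the map $B\pi_1E\to BM$ induced by $\pi_1E\to M$. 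Since a coproduct of $L$-equivalences over a bijection of index sets is again an $L$-equivalence (and $L$ commutes with multiplying by a set), it is enough to treat each map, working one component at a time where needed.

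The recurring tool is: \emph{if $F\to E'\to B$ is a fibration over a connected base with $LF = \ast$, then $E'\to B$ is an $L$-equivalence.} One proves this by fibrewise $L$-localization, which replaces the fibre by $\ast$ and hence the total space by $B$; the comparison map $E'\to B$ is an $L$-equivalence because $L$ is built from the nullification $P_{BC_p}$, which is compatible with fibrewise localization, together with $\FF_p$-localization, for which the comparison map of a fibrewise localization over a connected base is an $\FF_p$-homology equivalence by comparison of Serre spectral sequences; all bases occurring below are simple spaces. Granting this, for the first map it suffices to prove $\Omega^{\infty}F$ is $L$-acyclic whenever $F$ is a $1$-connected connective spectrum (apply the tool componentwise to $\Omega^{\infty}\tau_{\geq2}E\to\Omega^{\infty}E\to\Omega^{\infty}\tau_{\leq1}E$). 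For such $F$, the fibre of multiplication by $p$ on $\Omega^{\infty}F$ is $\Omega^{\infty}\Sigma^{-1}(F/p)$, and $\Sigma^{-1}(F/p)$ is a connected spectrum killed by inverting $p$; by \Cref{cor:unstablebousclass} it thus lies in $\langle BC_p\rangle^{\un}$, so this fibre is $L$-acyclic, and the tool makes $p\colon\Omega^{\infty}F\to\Omega^{\infty}F$ an $L$-equivalence. As $L$ preserves finite products of connected spaces, $L\Omega^{\infty}F$ is an $H$-group on which the $p$-th power map is an equivalence; being also simply connected and $\FF_p$-local, its homotopy groups are uniquely $p$-divisible, so it is $\FF_p$-acyclic, hence contractible.

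For the second map, the factor $\pi_0E$ plays no role, so it remains to show $B\pi_1E\to BM$ is an $L$-equivalence. Factor it as $B\pi_1E\to B(\pi_1E/\mathrm{tors})\to BM$. The fibre of the first map is $B(\mathrm{tors}\,\pi_1E)$; writing $\mathrm{tors}\,\pi_1E = A_{(p)}\oplus A_{(p')}$ with $A_{(p)}$ its $p$-primary part, $BA_{(p')}$ is $\FF_p$-acyclic while $BA_{(p)}$ is a filtered colimit of classifying spaces of finite abelian $p$-groups, each in $\langle BC_p\rangle^{\un}$ by \Cref{cor:unstablebousclass}; hence $B(\mathrm{tors}\,\pi_1E)$ is $L$-acyclic and the tool applies. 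The second map is the $\FF_p$-completion of $B(\pi_1E/\mathrm{tors})$ (for $G$ torsion free one has $K(G,1)^{\wedge}_p\simeq K(G^{\wedge}_p,1)$), hence an $\FF_p$-homology equivalence and so an $L$-equivalence; and $BM$ is $L$-local by \Cref{lemma:padictoruslocal}. Composing, $g$ exhibits $T$ as the $L$-localization of $X$, as desired.

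The step I expect to demand the most care is the tool, i.e.\ controlling how $L$ interacts with fibrations: $L$ is neither a nullification nor a homological localization, and it has $L$-acyclic objects --- for instance $K(\ZZ^{\wedge}_p,2) = \Omega^{\infty}\Sigma^{2}H\ZZ^{\wedge}_p$ --- that are neither $BC_p$-acyclic nor $\FF_p$-acyclic, so one cannot reduce the fibration behaviour of $L$ to that of either constituent in isolation but must genuinely interleave them.
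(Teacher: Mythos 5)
Your argument is correct, and it follows the same overall strategy as the paper's proof: verify that the target is $L$-local via \Cref{lemma:padictoruslocal}, and show that $g$ is an $L$-equivalence by exhibiting the relevant fibers as $L$-acyclic using \Cref{cor:unstablebousclass} together with fibrewise localization. Where you genuinely diverge is in how the fiber over $B(\pi_1X/\mathrm{tors})$ is killed. The paper does this in a single step: the fiber $F$ is $\Omega^{\infty}$ of a connected spectrum, the comparison map $\fib(F\to F[\tfrac1p])\to F$ is an $\FF_p$-homology equivalence (hence an $L$-equivalence), and $\fib(F\to F[\tfrac1p])$ is $\Omega^{\infty}$ of a connected spectrum that vanishes after inverting $p$, which \Cref{cor:unstablebousclass} places in $\langle BC_p\rangle^{\un}$; this handles $\tau_{\geq 2}$ and the torsion of $\pi_1$ simultaneously. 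You instead split along the Postnikov truncation $\tau_{\leq 1}E$ and dispose of the $2$-connective part by a divisibility argument (multiplication by $p$ becomes an equivalence after $L$ because its fiber is $\Omega^{\infty}$ of a connected $p$-torsion spectrum, whence $L\Omega^{\infty}\tau_{\geq2}E$ is a connected $H$-space with uniquely $p$-divisible homotopy groups, so $\FF_p$-acyclic and $\FF_p$-local, so contractible), then treat $B(\mathrm{tors}\,\pi_1E)$ by splitting off the prime-to-$p$ part. Both routes work; the paper's arithmetic-fracture device is shorter, while yours is more hands-on and makes visible exactly which input ($p$-divisibility versus $p$-torsion) kills which piece. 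You are also more explicit than the paper about the fibration ``tool,'' which the paper invokes silently in the sentence ``it suffices to show that $L$ applied to the fiber $F$ is contractible''; your justification by interleaving the two constituent localizations is workable but awkward, and the cleanest argument is simply that $L$ is the localization at a single map (since $\FF_p$-homology localization of spaces is, by Bousfield), so Dror Farjoun's fibrewise $f$-localization applies verbatim and the comparison map $E'\to\bar E'$ is an $f$-equivalence by construction, being a transfinite composite of cobase changes of products of $f$ with cells of the base.
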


\begin{proof}
	Since $L$-localization can be computed component-wise, we may assume $X = \Omega^{\infty}x$, for $x$ a connected spectrum.
	We first claim that $X \to B(\pi_1X/\mathrm{tors})$ is an equivalence after applying $L$. To do this, it suffices to show that $L$ applied to the fiber $F$ is contractible. We may replace $F$ with $\fib(F \to F[\frac 1 p])$, since the comparison map is an $\FF_p$-homology equivalence. But this fiber is in $\langle BC_p\rangle^{\mathrm{{un}}}$ by \Cref{cor:unstablebousclass}, so $L$ applied to it is contractible.
	
	
	The map $B(\pi_1X/\mathrm{tors}) \to B(\pi_1X/\mathrm{tors})_p$ is an $\FF_p$-homology equivalence, and so since $B(\pi_1X/\mathrm{tors})_p$ is $L$-local by \Cref{lemma:padictoruslocal}, we are done.
\end{proof}

We next prove the corollary from the introduction:

\begin{proof}[Proof of \Cref{cor:action}]
	An action of an infinite loop space $X$ on a space $Y$ is an $\EE_1$-algebra map $X \to \Map(Y,Y)$. Since $Y$ is $L$-local, $\Map(Y,Y)$ is also $L$-local, since it is obtained via limits from $Y$. Thus by \Cref{thm:main}, and the fact that presentable localizations of spaces are symmetric monoidal with respect to the product, we get a factorization of the action through $\coprod_{\pi_0X}B((\pi_1X/\mathrm{tors})^{\wedge}_p)$, which is an extension of $\pi_0X$ by the $p$-adic torus $B((\pi_1X/\mathrm{tors})^{\wedge}_p)$.
\end{proof}

We next extract an integral version of our theorem:

\begin{theorem}\label{thm:integralspecsull}
	Let $X$ be an infinite loop space with $\pi_iX\otimes \QQ=0$ for $i>0$. Then the map $X \to \pi_0X$ is the $P_{\vee_{p\in \PP}BC_p}$-localization map, and hence induces an equivalence on mapping spaces into $Y$ for each finite dimensional $Y$.
\end{theorem}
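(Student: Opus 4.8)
The plan is to run the argument of the proof of \Cref{thm:main}, but with its $\FF_p$-homological input replaced by an honest integral splitting of the relevant spectrum into its $p$-primary pieces. Since $P_{\vee_{p\in\PP}BC_p}$ is a nullification at a connected space, it commutes with disjoint unions and sends a coproduct of local spaces to a local space, so it is enough to understand it on each component of $X$. Writing $X=\Omega^{\infty}E$, every component of $X$ is carried by the group structure to the basepoint component, which is $\Omega^{\infty}\tau_{\geq1}E$, and by hypothesis $x:=\tau_{\geq1}E$ is a connected connective spectrum with $x\otimes\QQ=0$. So the whole theorem reduces to the assertion that $P_{\vee_{p\in\PP}BC_p}(\Omega^{\infty}x)=\ast$ for such $x$: granting this, $P_{\vee_{p\in\PP}BC_p}(X)=\coprod_{\pi_0X}\ast=\pi_0X$, the unit is visibly the map $X\to\pi_0X$, and the statement about mapping spaces into a finite dimensional $Y$ is then immediate from the second part of \Cref{lemma:pcompletefinitelocal}.

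To produce the splitting I would set $x\langle p\rangle:=\fib\bigl(x\to x[p^{-1}]\bigr)$. Because the homotopy groups of $x$ are torsion, on each homotopy group the map $A\to A[p^{-1}]$ is surjective with kernel the $p$-primary torsion subgroup, so the long exact sequence shows that $\pi_n x\langle p\rangle$ is the $p$-primary part of $\pi_n x$. In particular each $x\langle p\rangle$ is connected, connective, and satisfies $x\langle p\rangle[p^{-1}]=0$, and the natural maps $x\langle p\rangle\to x$ assemble into $\bigoplus_p x\langle p\rangle\to x$, which on homotopy groups is the canonical decomposition of a torsion abelian group into its primary parts, hence an equivalence. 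Presenting the coproduct as the filtered colimit over finite sets $S$ of primes of the finite products $\prod_{p\in S}x\langle p\rangle$, and using that $\Omega^{\infty}$ preserves products and filtered colimits of connective spectra, one gets $\Omega^{\infty}x\simeq\mathrm{colim}_S\prod_{p\in S}\Omega^{\infty}x\langle p\rangle$.

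Finally I would apply $P_{\vee_{p\in\PP}BC_p}$: as a presentable localization it preserves colimits, and as a localization of spaces it is symmetric monoidal for $\times$ and so preserves finite products, giving $P_{\vee_{p\in\PP}BC_p}(\Omega^{\infty}x)\simeq\mathrm{colim}_S\prod_{p\in S}P_{\vee_{p\in\PP}BC_p}(\Omega^{\infty}x\langle p\rangle)$. For each $p$, \Cref{cor:unstablebousclass} applies to $x\langle p\rangle$ (connected with $x\langle p\rangle[p^{-1}]=0$) and yields $P_{BC_p}(\Omega^{\infty}x\langle p\rangle)=\ast$; since a $P_{\vee_{p\in\PP}BC_p}$-local space is in particular $BC_p$-local, $P_{\vee_{p\in\PP}BC_p}$ factors through $P_{BC_p}$, whence $P_{\vee_{p\in\PP}BC_p}(\Omega^{\infty}x\langle p\rangle)=\ast$ as well. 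The displayed colimit is then a filtered colimit of one-point spaces, hence a point, which finishes the proof. The one step needing genuine care is the middle one — arranging the primary decomposition and verifying that both $\Omega^{\infty}$ and the nullification commute with the colimit $\mathrm{colim}_S\prod_{p\in S}(-)$ — precisely because \Cref{cor:unstablebousclass} only handles spectra that are torsion at a single prime; the remainder is formal.
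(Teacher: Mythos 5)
Your proof is correct, but it assembles the primes differently from the paper. The paper stays stable throughout: after the same reduction to $X=\Omega^{\infty}x$ with $x$ connected and rationally trivial, it observes that $x\in\langle\oplus_{p\in\PP}\Sigma\SP/p\rangle^{s}$, converts this via \Cref{thm:cnbousfieldclassmain} into membership in $\langle\Sigma^{\infty}(\vee_{p\in\PP}BC_p)\rangle^{s}$, and then applies \Cref{prop:cnunstablebousclasss} once, to the wedge $\vee_{p\in\PP}BC_p$, to conclude $P_{\vee_{p\in\PP}BC_p}(\Omega^{\infty}x)=\ast$. You instead perform the primary decomposition $x\simeq\oplus_p x\langle p\rangle$ explicitly, push it through $\Omega^{\infty}$ as a filtered colimit of finite products, and quote the single-prime statement of \Cref{cor:unstablebousclass} factor by factor; this forces you to check that the nullification commutes with the colimit and with finite products, which you do correctly (acyclics are closed under colimits since $P$-equivalences are strongly saturated and the index poset is filtered, and the product step is the same compatibility of presentable localizations of spaces with $\times$ that the paper uses in \Cref{cor:action}). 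What your route buys is that it only ever invokes the already-packaged unstable Theorem \ref{cor:unstablebousclass} rather than the stable-to-unstable comparison for an infinite wedge, and it makes explicit the primary decomposition that the paper's one-line claim $x\in\langle\oplus_{p\in\PP}\Sigma\SP/p\rangle^{s}$ leaves implicit; the cost is the extra bookkeeping with colimits. The reductions at the two ends (components of an infinite loop space, and locality of finite dimensional spaces via \Cref{lemma:pcompletefinitelocal}) match the paper.
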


\begin{proof}
	Since $P_{\vee_{p\in \PP}BC_p}$ preserves connected components, we may assume WLOG that $X = \Omega^{\infty}x$ where $x$ is rationally trivial and connected. Then by using \Cref{prop:cnunstablebousclasss} and \Cref{thm:cnbousfieldclassmain}, the result follows since $x \in \langle \oplus_{p \in \PP}\Sigma \SP/p\rangle^s$.
\end{proof}

The proof of \Cref{cor:action} allows one to obtain the following integral version:
\begin{corollary}\label{cor:intaction}
	An action of an infinite loop space $X$ with $\pi_iX\otimes \QQ=0$ for $i>0$ on a finite dimensional space $Y$ factors through an action of $\pi_0X$.
\end{corollary}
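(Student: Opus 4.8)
The plan is to mirror the proof of \Cref{cor:action}, feeding in the integral statement \Cref{thm:integralspecsull} in place of \Cref{thm:main}. Recall that an action of the infinite loop space $X$ on $Y$ is the same data as an $\EE_1$-algebra map $X \to \Map(Y,Y)$ in $\Spc$ equipped with its cartesian symmetric monoidal structure, so it suffices to produce a factorization of this map through $X \to \pi_0X$ by $\EE_1$-algebra maps.

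First I would check that $\Map(Y,Y)$ is $P_{\vee_{p\in\PP}BC_p}$-local. By \Cref{lemma:pcompletefinitelocal}, $Y$ is itself $P_{\vee_{p\in\PP}BC_p}$-local because it is finite dimensional. Since the local objects of a nullification are closed under all limits, and $\Map(Y,Y)$ is the cotensor of $Y$ by $Y$, hence a limit of copies of $Y$, it follows that $\Map(Y,Y)$ is local as well. The same exponential adjunction shows that $P_{\vee_{p\in\PP}BC_p}$ is compatible with the cartesian monoidal structure on $\Spc$ — its equivalences are closed under products, since for a local target $Z$ the cotensor $Z^Y$ is again local — exactly as used in the proof of \Cref{cor:action}. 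Consequently the localization map $X \to P_{\vee_{p\in\PP}BC_p}X$ is a map of $\EE_1$-algebras (indeed of $\EE_\infty$-algebras).

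Now the hypothesis $\pi_iX\otimes\QQ = 0$ for $i>0$ lets me apply \Cref{thm:integralspecsull}, which identifies $P_{\vee_{p\in\PP}BC_p}X$ with the discrete abelian group $\pi_0X$ via the canonical map. Since $\Map(Y,Y)$ is local, the universal property of localization produces a factorization of the $\EE_1$-map $X \to \Map(Y,Y)$ through $X \to \pi_0X$, and the factoring map $\pi_0X \to \Map(Y,Y)$ is again a map of $\EE_1$-algebras; since $X$ is grouplike, $\pi_0X$ is a discrete (abelian) group, so this is precisely an action of $\pi_0X$ on $Y$ refining the original one. I do not expect a genuine obstacle here: all the content sits in \Cref{thm:integralspecsull}, and the only points to verify — that $\Map(Y,Y)$ is local and that the nullification respects the product — are immediate from closure of local objects under limits together with the exponential adjunction.
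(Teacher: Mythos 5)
Your proposal is correct and follows exactly the route the paper intends: the paper's proof of this corollary is simply the observation that the argument for \Cref{cor:action} goes through verbatim with \Cref{thm:integralspecsull} replacing \Cref{thm:main} and \Cref{lemma:pcompletefinitelocal} supplying the locality of $Y$. Your write-up just spells out those details.
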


	\nocite{}
	\printbibliography
	\Addresses
\end{document}